\DeclareFontFamily{U}{ntxmia}{\skewchar \font =127}
 \DeclareFontShape{U}{ntxmia}{m}{it}{
                        <-> \ntxmath@scaled ntxmia
                      }{}    
                      \DeclareFontShape{U}{ntxmia}{b}{it}{
                        <-> \ntxmath@scaled ntxbmia
                      }{}
\def\NAT@spacechar{~}
\newcommand{\urlprefix}{}
\crefname{figure}{figure}{figures}
\crefname{claim}{Claim}{Claims}
\crefname{figure}{Figure}{Figures}
\crefname{claim}{claim}{claims}
\Crefname{figure}{Figure}{Figures}
\Crefname{claim}{Claim}{Claims}
\theoremstyle{definition}
\newtheorem{definition}{Definition}
\theoremstyle{plain}
\newtheorem{claim}{Claim}
\newtheorem{proposition}[definition]{Proposition}
\newtheorem{theorem}[definition]{Theorem}
\newtheorem{corollary}[definition]{Corollary}
\newtheorem{lemma}[definition]{Lemma}
\newtheorem{conjecture}[definition]{Conjecture}
\newtheorem{observation}[definition]{Observation}
\numberwithin{equation}{section}
\renewcommand{\binom}[2]{\ensuremath{\mleft(\kern-.1em\genfrac{}{}{0pt}{}{#1}{#2}\kern-.1em\mright)}}    
\newcommand{\inbinom}[2]{\ensuremath{\bigl(\kern-.1em\genfrac{}{}{0pt}{}{#1}{#2}\kern-.1em\bigr)}} 
\DeclareMathOperator\supp{supp}
\newcommand{\cA}{\mathcal{A}}
\newcommand{\cF}{\mathcal{F}}
\newcommand{\PP}{\mathbb{P}}
\newcommand{\EE}{\mathbb{E}}
\newcommand{\one}{\mathds{1}}
\newcommand{\lgr}{\left<g\right>}
\newcommand{\lGr}{\left<G\right>}
\newcommand{\lgrJL}{\left<g\right>_{J,L}}
\newcommand{\tf}{\tilde{f}}
\newcommand{\tg}{\tilde{g}}
\newcommand{\tC}{\tilde{C}}
\newcommand{\NN}{\mathbb{N}}
\def\moverlay{\mathpalette\mov@rlay}
\def\mov@rlay#1#2{\leavevmode\vtop{%
  \baselineskip\z@skip \lineskiplimit-\maxdimen
  \ialign{\hfil$\m@th#1##$\hfil\cr#2\crcr}}}
\newcommand{\charfusion}[3][\mathord]{
    #1{\ifx#1\mathop\vphantom{#2}\fi
        \mathpalette\mov@rlay{#2\cr#3}
      }
    \ifx#1\mathop\expandafter\displaylimits\fi}
\renewcommand{\deg}{\operatorname{deg}}
\newcommand{\COMMENT}[1]{}
\renewcommand{\COMMENT}[1]{\footnote{\textcolor{blue!70!black}{#1}}} 
\newcommand{\COMNEW}[1]{}
\renewcommand{\COMNEW}[1]{\footnote{\textcolor{red!70!black}{#1}}} 
\title{Some results on fractional vs. expectation thresholds}
\author[T.~Fischer]{Thomas Fischer}
\author[Y.~Person]{Yury Person}
\thanks{Research is supported by DFG grant PE 2299/3-1}
\date{\today}
\begin{document}

\begin{abstract}
A conjecture of Talagrand (2010) states that the so-called expectation and fractional expectation thresholds are always within at most some constant factor from each other. Expectation (resp. fractional expectation) threshold $q$ (resp. $q_f$) for an increasing nontrivial class $\mathcal{F}\subseteq 2^X$ allows to locate the threshold for $\mathcal{F}$ within a logarithmic factor (these are important breakthrough results of Park and Pham (2022), resp. Frankston, Kahn, Narayanan and Park (2019)). We will survey what is known about the relation between $q$ and $q_f$ and prove some further special cases of Talagrand’s conjecture.
\end{abstract}

\maketitle

\section{Introduction}
Let $X$ be a finite nonempty set and let $p\in[0,1]$. An abstract model for studying random subsets of $X$ is often denoted by $X_p$, where each element from $X$ is included in $X_p$ independently of the others with probability $p$. Depending on the choice of $X$, one obtains various probabilistic models which were studied extensively in the last decades, such as a random subset of the positive integers $[n]_p$ (with $X=[n]$), binomial random graph $G(n,p)$ (with $X=\binom{[n]}{2}$ and we identify a graph with its edges), random $k$-uniform hypergraph $H^{(k)}(n,p)$ (where $X=\binom{[n]}{k}$). For initial pointers to the literature we refer to standard reference books such as~\cite{AS2016, Bol01, JLR00, FK16}.

 For a given set $X$ we denote its power set by $2^X$. A set (or a class) $\cF\subseteq 2^X$ is called a \emph{property}; moreover, we say that $\cF$ is \emph{nontrivial} if $\cF\neq \emptyset$, $2^X$. A property $\cF$ is called \emph{increasing} if  whenever $A\in \cF$ and  $A\subseteq B$ we have $B\in \cF$ (that is, adding elements to a set $A\in \cF$ does not destroy the property). An example of an increasing graph property  is  subgraph containment such as, say, hamiltonicity. 

For any increasing nontrivial $\cF\subseteq 2^X$ we will be interested in the probability $\PP[X_p\in\cF]$, that is, how likely is $X_p$ to possess the property $\cF$? One can show that the function $f(p):=\PP[X_p\in\cF]$ is continuous and strictly increasing in $p$ and hence there exists a unique $p_c\in[0,1]$ with $\PP[X_{p_c}\in\cF]=1/2$. Such value $p_c$ is called \emph{the threshold} for $\cF$. 

In the setting of random graphs, threshold functions were discovered by Erd\H{o}s and R\'enyi in~\cite{ErdRen60} who observed that many graph properties possess thresholds. The study of thresholds has been and remains one of the central topics of study in the theory of random graphs ever since. As for a general result,  Bollob\'as and Thomason~\cite{BolTho87} proved that every nontrivial increasing property $\cF$ has a threshold function.\footnote{A threshold function for properties of random graphs is defined somewhat differently, as follows: $\hat{p}\colon \NN\to [0,1]$ is a threshold for some property $\cA=\cup_{n\in\NN}\cA_n$, if $\PP[G(n,p)\in\cA_n]\to 1$ for $p=\omega(\hat{p})$ and $\PP[G(n,p)\in\cA_n]\to 0$ for $p=o(\hat{p})$ as $n$ tends to infinity (observe that one considers a sequence of probability spaces). Defining for a nontrivial increasing property $\cA_n$ the threshold $\hat{p}(n):=p_c$ with $\PP[G(n,p_c)\in\cA_n]=1/2$ for every $n$, the function $\hat{p}$ is a threshold function.} Despite the fact of knowing that a threshold function exists, it is not clear at all how to determine it. Kahn and Kalai~\cite{KK07} and subsequently Talagrand~\cite{TalagrandOriginal} proposed far-reaching conjectures about the location of the threshold $p_c$. The latter was proved by Frankston, Kahn, Narayanan and Park~\cite{FKNP19} (following ideas from a breakthrough of \citet{ALWZ21} on the sunflower conjecture) while the former was proved by Park and Pham in~\cite{PP23}. In the following we introduce some notation in order to describe the results and come to the matter of the present paper.

 One common idea for a lower bound on $p_c$ is to find a random variable $Y\geq 0$ (dependent on $p$ and $n$) such that $Y\geq 1$ whenever $X_{p}\in\mathcal{F}$  holds. This yields with Markov's inequality that $\mathbb{P}\left(X_{p}\in\mathcal{F}\right)\leq \mathbb{P}\left(Y\geq 1\right)\leq \mathbb{E}\left[Y\right]$. 
 
 One possibility to construct such a random variable $Y$ is to find a set $G\subseteq 2^X$ such that for every $S\in\mathcal{F}$ there exists a $T\in G$ such that $T\subseteq S$ and then let $Y=Y_G$ denote the number of $T\in G$ which are contained in $X_p$, i.e.\ $T\subseteq X_p$. It should be clear that $\EE\left[Y_G\right]=\sum_{T\in G} p^{\left|T\right|}$. We call $\cF$ to be \emph{$p$-small} if there exists such a set $G\subseteq 2^X$ such that $\EE[Y_G]\le 1/2$. 

A somewhat advanced way to construct a random variable $Y$ is  to find a function $g\colon 2^X\rightarrow\left[0,1\right]$ which satisfies for every $S\in\cF$ the inequality $\sum_{T\subseteq S}g(T)\geq 1$ and to choose $Y=Y_g:=\sum_{T\subseteq X_p} g(T)$. In this case the function $g$ can be thought of as a fractional version for the set $G$ above (it should be clear that $Y_G=Y_{\one_G}$). Again we can easily compute $\mathbb{E}\left[Y_g\right]=\sum_{T\in 2^X} g(T)\cdot p^{\left|T\right|}$.  We call $\cF$ to be \emph{weakly $p$-small} if we can find such a function $g$ as above (i.e.\ $X_{p}\in\mathcal{F}\Rightarrow Y_g\geq 1$) with  $\mathbb{E}\left[Y_g\right]\leq \frac{1}{2}$. It should be clear that if $\cF$ is $p$-small then $\cF$ is also weakly $p$-small with $g=\one_G$.

\begin{definition}\label{def:p-small}
For an increasing nontrivial $\cF\subseteq 2^X$ define:
\begin{equation}\label{eq:exp-thr}
q:=q(\cF):=\max\{p\colon \cF \text{ is $p$-small}\},
\end{equation}
and 
\begin{equation}\label{eq:exp-thr-w}
q_f:=q_f(\cF):=\max\{p\colon \cF \text{ is weakly $p$-small}\}.
\end{equation}
One refers to $q$ as the \emph{expectation threshold}, whereas $q_f$ is the \emph{fractional expectation threshold}.
\end{definition}

From the discussion above we see immediately that $q\le q_f\le p_c$ (by using $g=\mathds{1}_{G}$). Kahn and Kalai conjectured in~\cite{KK07}  that there exists a universal constant $K>0$ such that $p_c\le K q \log |X|$, and  Talagrand conjectured in~\cite{TalagrandOriginal} an apparently weaker form: $p_c\le K  q_f \log |X|$. These conjectures were resolved in breakthrough works   in~\cite{PP23} and in~\cite{FKNP19}\footnote{In fact, Talagrand conjectured the strengthenings of both conjectures that $\log |X|$ can be replaced by $\log \ell$, where $\ell$ is the largest cardinality among all minimal sets of $\cF$. Both strengthenings were proved in~\cite{PP23} resp.\  in~\cite{FKNP19}.}, which allows for many properties to obtain new results or to provide alternative proofs of deep results from the theory of random graphs.  
 
Until the proof of the Kahn-Kalai conjecture in~\cite{PP23}, a promising route was suggested by Talagrand in~\cite[Conjecture 6.3]{TalagrandOriginal} to show that $q$ and $q_f$ are always within a constant factor of each other.
\begin{conjecture}[Talagrand~\cite{TalagrandOriginal}]\label{conj:smallness}
    There exists some fixed $L>1$ such that for every finite nonempty $X$ and any nontrivial $\mathcal{F}\subseteq 2^X$ the following is true: If $\mathcal{F}$ is weakly $p$-small then   $\cF$ is also $(p/L)$-small. Equivalently: $q_f\leq L\cdot q$.
\end{conjecture}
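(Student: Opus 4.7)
Let $g \colon 2^X \to [0,1]$ witness weak $p$-smallness, so $\sum_{T \subseteq S} g(T) \ge 1$ for every $S \in \cF$ and $\sum_T g(T) p^{|T|} \le 1/2$. The task is to produce $G \subseteq 2^X$ covering $\cF$ with $\sum_{T \in G} (p/L)^{|T|} \le 1/2$. Since $\cF$ is increasing one may restrict $g$ to subsets of the minimal members of $\cF$. I would then dyadically stratify the support via $\cG_{i,k} := \{T : |T|=k,\ 2^{-i-1} < g(T) \le 2^{-i}\}$, so that $g$ is essentially constant on each stratum, and pigeonhole over the $O((\log|X|)^2)$ nonempty strata to find, for each $S \in \cF$, a stratum whose contribution to $\sum_{T \subseteq S} g(T)$ is already $\Omega(1/(\log|X|)^2)$.

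\textbf{Rounding.} I would build $G$ by including each $T$ independently with probability $h(T) := \min(1,\, c \cdot g(T))$ for a large absolute constant $c$. The expected cost is
\[
\EE\Bigl[\sum_{T \in G} (p/L)^{|T|}\Bigr] \le c \sum_T g(T) p^{|T|} L^{-|T|},
\]
so the budget $1/2$ is beaten once $L^{|T|} \ge 4c$ on $\supp(g)$; for a fixed $S \in \cF$ independence and the fractional coverage give $\PP[\text{no } T \subseteq S \text{ lies in } G] \le e^{-c}$, and a union bound over minimal members of $\cF$ yields a deterministic covering $G$ whenever that count is at most $e^{c/2}$.

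\textbf{Main obstacle.} The number of minimal members of $\cF$ can be doubly exponential in $|X|$, so the crude union bound fails in general; plugging in Frankston--Kahn--Narayanan--Park as a black box recovers only the weaker bound $q_f \le p_c \le K q \log|X|$, and eliminating that logarithmic factor is precisely the content of \cref{conj:smallness}. The hardest step is therefore to obtain a covering argument that is robust across strata without costing a $\log$: either by running a spread-type argument stratum by stratum with stratum-specific constants and then gluing the resulting covers while controlling their total $\ell_1$-cost, or by exploiting positive correlations between the events $\{\text{some } T \subseteq S \text{ lies in } G\}_{S}$ to dispense with the union bound altogether. Short of a genuinely new idea of this sort, the route to concrete progress is through special cases where the stratification degenerates --- for instance when $g$ may be assumed supported on a single size $k$ (so stratification is unnecessary), when $\cF$ has only polynomially many minimal members (so the union bound is affordable), or when $\cF$ is sufficiently symmetric that $g$ can be averaged to a fractional cover depending only on $|T|$ and the rounding reduces to a direct LP integrality-gap calculation --- which is where I would expect the results announced in the abstract to sit.
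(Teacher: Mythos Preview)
You correctly recognise that \cref{conj:smallness} is an open conjecture which the paper does not prove; there is no ``paper's proof'' to compare against. Your proposal is not a proof but a diagnosis of why the naive approach fails and where partial progress lies, and on that level it aligns closely with what the paper actually does.

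Your randomised rounding argument is precisely Lemma~\ref{LemmaProbCovering} and Theorem~\ref{thm:randomized}: including each $T$ with probability $\min\{1,cg(T)\}$ and union-bounding over $\lgr$ yields a cover whose weight loss is of order $n^{1/k}$ in the $k$-uniform case, which collapses to a constant exactly when $k\gtrsim\log n$. Your size-stratification is Theorem~\ref{thm:reduction_uniform} (reduction to $\supp(g)\subseteq\binom{X}{k}$), and your weight-stratification is the decomposition $g_i=4^{1-i}\one_{\{4^{1-i}\ge g>4^{-i}\}}$ used in Theorem~\ref{TheoremUnWeightedToWeighted}. One inaccuracy: the number of nonempty strata is not $O((\log|X|)^2)$ --- there are up to $|X|$ sizes $k$ and the weight levels $i$ are a priori unbounded. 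The paper handles this not by truncation but by pigeonholing against the geometric series $\sum_k 2^{-k}$ and $\sum_i 2^{-i}$, so the stratification itself costs no logarithm; the log loss in the general problem comes solely from the union bound, as you say.

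Your identification of the main obstacle is on the mark, and the paper's remark after Theorem~\ref{thm:randomized} confirms the $n^{1/k}$ loss is sharp for covers $G\subseteq\supp(g)$. The paper's further results sit exactly where you predict: Theorem~\ref{TheoremSimpleHypergraphs} treats the special case where $\supp(g)$ has bounded codegree, and circumvents the union bound by building the cover out of objects \emph{outside} $\supp(g)$ (unions of vertex-disjoint stars), rather than by any correlation or spread argument.
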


We find it convenient to formulate the problem somewhat differently. For this purpose we are introducing some additional notation. 

First we introduce the weight functions implicitly mentioned previously.
\begin{definition}\label{def:weight_fcts}
    For $p\in [0,1]$, $G\subseteq 2^X$ and $g: 2^X \rightarrow [0,\infty)$ we define:
    \begin{alignat*}{100}
        w(G,p) & := \sum_{T\in 2^X} & \mathds{1}_{\{T\in G\}} \cdot p^{|T|} \\
        w(g,p) & := \sum_{T\in 2^X} & g(T)\cdot p^{|T|}
    \end{alignat*}
\end{definition}
Thus, in the notation above the weights $w(G,p)$ and $w(g,p)$ correspond to $\EE[Y_G]$ and $\EE[Y_g]$. Moreover, the range of $g$ is allowed to be $[0,\infty)$ since it will be used in later sections. 

The set $G$ and the function $g$ which are needed for $\cF$ to be $p$- resp.\ weakly $p$-small imply that the following sets contain $\cF$ (corresponding to the sets $\{S\colon Y_G(S)\ge 1\}$ and $\{S\colon Y_g(S)\ge 1\}$ respectively).
\begin{definition}\label{def:cov_sets}
    For $G\subseteq 2^X$ and $g\colon 2^X \rightarrow [0,\infty)$ we define:
    \begin{alignat*}{100}
        \left<G\right> & := \left\{ S\in 2^X \left| \ \sum_{T\subseteq S} \mathds{1}_{\{T\in G\}} \geq 1 \right. \right\}  \\
        \left<g\right> & := \left\{ S\in 2^X \left| \  \sum_{T\subseteq S} g(T) \geq 1 \right. \right\}
    \end{alignat*}
\end{definition}

The following observation is straightforward.
\begin{observation}\label{obs:p-small}
 We have:
\begin{enumerate}[(i)]
\item 
The set $\cF$ is $p$-small if and only if there exists a set $G\subseteq 2^X$ satisfying $\cF\subseteq \left<G\right>$ and $w(G,p)\leq 1/2$.
\item 
The set $\cF$ is weakly $p$-small if and only if there exists a function $g\colon 2^X \rightarrow [0,1]$ satisfying $\cF\subseteq \left<g\right>$ and $w(g,p)\leq 1/2$.
\end{enumerate}
\end{observation}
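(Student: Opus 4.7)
The plan is to verify both equivalences by simply unfolding the definitions given earlier in the introduction and comparing them with Definitions~\ref{def:weight_fcts} and~\ref{def:cov_sets}. Since the statement is phrased as an \emph{if and only if}, both directions are handled simultaneously by observing that the defining conditions are literally the same up to notation.

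For part (i), I would start from the definition of $p$-small: $\cF$ is $p$-small precisely when there exists $G\subseteq 2^X$ such that (a) for every $S\in\cF$ there exists $T\in G$ with $T\subseteq S$, and (b) $\EE[Y_G]\le 1/2$. Condition (a) translates to $\sum_{T\subseteq S}\mathds{1}_{\{T\in G\}}\ge 1$ for all $S\in\cF$, which by Definition~\ref{def:cov_sets} is precisely the statement $\cF\subseteq\langle G\rangle$. Condition (b) unfolds as
\[
\EE[Y_G]=\sum_{T\in G}p^{|T|}=\sum_{T\in 2^X}\mathds{1}_{\{T\in G\}}\cdot p^{|T|}=w(G,p),
\]
so (b) is equivalent to $w(G,p)\le 1/2$. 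Combining these gives (i).

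For part (ii), the argument is entirely analogous: $\cF$ is weakly $p$-small iff there exists $g\colon 2^X\to[0,1]$ with $\sum_{T\subseteq S}g(T)\ge 1$ for every $S\in\cF$ and $\EE[Y_g]\le 1/2$. The first condition is $\cF\subseteq\langle g\rangle$ by Definition~\ref{def:cov_sets}, and the second becomes $w(g,p)\le 1/2$ by Definition~\ref{def:weight_fcts}.

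There is no real obstacle here; the only thing worth double-checking is that the range $[0,1]$ is retained in part (ii) (rather than $[0,\infty)$ which is allowed in Definition~\ref{def:weight_fcts}), since the definition of weakly $p$-small explicitly restricts to $g\colon 2^X\to[0,1]$. This is preserved throughout the equivalence, so no subtlety arises.
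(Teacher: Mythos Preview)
Your proposal is correct and matches the paper's treatment: the paper simply states that the observation is ``straightforward'' and gives no proof, which is precisely because it amounts to the definitional unfolding you carry out. There is nothing to add.
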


Now as a simplification since $X$ is finite there is always an $n\in\NN$ with $|X|=n$ and we can think of $X$ as $[n]=\{1,\ldots, n\}$. Moreover, since $p^0=1$ we may assume that $\emptyset\not\in G$.

Recall that the largest $p$ in Observation~\ref{obs:p-small} corresponds to $q$ and $q_f$ resp. and $\emptyset\not\in G$, hence if we change $1/2$ to $1$ in Definition~\ref{eq:exp-thr} and require $g(\emptyset)=0$, then the values $q$ and $q_f$ change at most by  a factor of $2$. We thus restate Talagrand's conjecture (Conjecture~\ref{conj:smallness}) as follows.

\begin{conjecture}\label{TalagrandOurFormulation}\label{conj:Talagrand_b}
    There exists some fixed $L>1$ such that for all $n\in\NN$, $g: 2^X \rightarrow [0,1]$ with $g(\emptyset)=0$ and $p\in [0,1]$ the following holds. 
 If     $w(g,p) = 1$ then there exists a set $G\subseteq 2^X\setminus\{\emptyset\}$ with $\left<g\right>\subseteq \left<G\right>$ and $w\left(G,\frac{p}{L}\right) \leq 1$.   
\end{conjecture}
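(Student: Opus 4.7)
The plan is to attack Conjecture~\ref{conj:Talagrand_b} by randomised rounding of $g$ with an auxiliary parameter $c>1$. Given $g$ with $w(g,p)=1$ and $g(\emptyset)=0$, sample each nonempty $T\in 2^X$ independently into $G$ with probability $\min\{1,c\,g(T)\}$. Since every $T\in\supp(g)$ has $|T|\ge 1$, one easily checks
\[
    \EE[w(G,p/L)] \;\le\; \sum_{T\neq\emptyset} c\,g(T)\cdot (p/L)^{|T|} \;\le\; \frac{c}{L}\cdot w(g,p) \;=\; \frac{c}{L},
\]
so if $L$ is chosen somewhat larger than $c$, Markov yields a realisation with $w(G,p/L)\le 1$. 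The whole difficulty thus lies in forcing $\lgr\subseteq\lGr$ with positive probability for that same realisation.

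For a single $S\in\lgr$, independence together with $1-x\le e^{-x}$ gives
\[
    \PP[S\notin\lGr] \;=\; \prod_{\emptyset\neq T\subseteq S}(1-c\,g(T)) \;\le\; \exp\Bigl(-c\sum_{T\subseteq S}g(T)\Bigr) \;\le\; e^{-c}.
\]
Since $\lGr$ is upward closed, it is enough to cover the antichain $\cM$ of minimal elements of $\lgr$, and a union bound then yields $\PP[\lgr\not\subseteq\lGr]\le |\cM|\,e^{-c}$. From here one would try to stratify $\cM$ by cardinality and bound the number of minimal covers at each level, exploiting that minimality forces $\sum_{T\subsetneq S}g(T)<1$; in combination with a bounded choice of $c$ this would close the argument.

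The main obstacle — the reason the conjecture has resisted proof for over a decade — is precisely the strength demanded of that union bound. In the hardest instances $|\cM|$ is exponential in $|X|$, so any rounding blind to the structure of $g$ is forced into $c=\Omega(|X|)$ and hence $L=\Omega(|X|)$. To drive $L$ down to a constant I would replace the single rounding by a multi-scale procedure: process $\supp(g)$ level by level in increasing $|T|$, rounding with size-dependent constants $c_k$ so that a minimal cover $S\in\cM$ is covered either by a small $T\subseteq S$ already sampled at a finer level, or by a larger $T\subseteq S$ sampled with a stronger constant conditioned on the previous rounds. Alternatively one can try to extract from $g$ a normalised spread measure on $X$ and adapt the covering machinery of Alweiss--Lovett--Wu--Zhang~\cite{ALWZ21} and Frankston--Kahn--Narayanan--Park~\cite{FKNP19} \emph{in reverse}, converting a fractional covering into an integer covering at only a constant multiplicative cost in $p$. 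I expect any complete solution to fuse such structural input with the basic rounding scheme above.
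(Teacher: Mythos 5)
You have not proved the statement, and indeed you do not claim to: your last two paragraphs are a research plan, not an argument. The statement is Talagrand's conjecture itself, which the paper explicitly leaves open (it only proves special cases), so the honest assessment is that your proposal is a heuristic sketch of one known partial approach with the decisive step missing. Concretely, your rounding scheme --- sample each $T$ into $G$ with probability $\min\{1,c\,g(T)\}$, bound $\PP[S\notin\lGr]\le e^{-c}$ via $1-x\le e^{-x}$, take a union bound, and control $\EE[w(G,p/L)]\le c/L$ --- is exactly the content of Lemmas~\ref{LemmaProbCovering} and~\ref{LemmaExpectedWeight} and of Theorem~\ref{thm:randomized} in the paper, including your refinement of union-bounding only over the minimal elements of $\lgr$ (the paper notes this improves $n$ to $\log_2 m$). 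What this buys is $L\approx 4n^{1/k}$ in the $k$-uniform case, not a constant.

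The genuine gap is that no refinement of this scheme can close the conjecture as long as the output satisfies $G\subseteq\supp(g)$, which your sampling (and your proposed multi-scale variant, which still only samples sets of $\supp(g)$ level by level) inherently enforces. The paper's own example shows this is a hard obstruction, not merely a lossy union bound: take $g=\binom{n/2}{k}^{-1}\cdot\one_{\binom{X}{k}}$, so $\lgr=\{S\colon|S|\ge n/2\}$ and $p\approx 1/2$; any $G\subseteq\binom{X}{k}$ with $\lgr\subseteq\lGr$ must have $|G|\ge n/(2k)$, forcing $L=\Omega(n^{1/k})$ --- for small fixed $k$ (already $k=1,2$) this rules out any constant $L$. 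The missing idea, which is precisely where the known special cases do their work, is how to cover $\lgr$ by sets \emph{outside} $\supp(g)$: Talagrand's singleton case (Proposition~\ref{TheoremSingletons}) uses unions of many singletons, Proposition~\ref{prop:volume} uses all subsets of a prescribed size of a set $V$, and the near-linear case (Theorem~\ref{TheoremLinearHypergraphsUnweighted}) uses unions of vertex-disjoint stars, with a separate structural argument (not a union bound) to show these families cover $\lgrJL$ cheaply. Your appeal to running the spread machinery of~\cite{ALWZ21,FKNP19} ``in reverse'' is exactly the open problem restated, so as written the proposal contains no route to a constant $L$.
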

 It is not difficult to see that Conjectures~\ref{conj:smallness} and~\ref{conj:Talagrand_b} are equivalent. Indeed, if the former is true and $w(g,p) = 1$, then $\cF:=\left<g\right>$ 
 is weakly $p'$-small for some $p'\in[p/2,p]$ and hence $\cF$ is $p'/L$-small. Thus there exists a set $G\subseteq 2^X$ with $\cF=\left<g\right>\subseteq \left<G\right>$ and $w\left(G,\frac{p}{2L}\right)\le w\left(G,\frac{p'}{L}\right) \leq 1/2$. On the other hand, if Conjecture~\ref{conj:Talagrand_b} is true and some nontrivial set $\cF\subseteq 2^X$ is weakly $p$-small then there exists a  weight function $g\colon 2^X\to[0,1]$ with $\cF\subseteq \left<g\right>$ and $w(g,p)\le 1/2$. This implies $g(\emptyset)\le 1/2$. We define $\tilde{g}(\emptyset)=0$ and for $T\neq \emptyset$ we set $\tilde{g}(T):=\min\{2g(T),1\}$ and observe that $\lgr\subseteq \left <\tilde{g}\right>$ and $w(\tilde{g},p)\le 2\sum_{T\in 2^X} g(T)p^{|T|}\le 1$. We thus find $\tilde{p}\ge p$ so that $w(\tilde{g},\tilde{p})=1$. The truth of Conjecture~\ref{conj:Talagrand_b} implies the existence of $G\subseteq 2^X\setminus\{\emptyset\}$ with 
 $\cF\subseteq \left<g\right>\subseteq \left<\tilde{g}\right>\subseteq \left<G\right>$ and $w\left(G,\frac{\tilde{p}}{L}\right) \leq 1$. Hence we obtain $w\left(G,\frac{p}{2L}\right)\le w\left(G,\frac{\tilde{p}}{2L}\right) \leq 1/2$.

 Another equivalent version of Conjecture~\ref{conj:smallness} can be stated as follows. 
 \begin{conjecture}\label{conj:KKweak}
  There is a fixed $L>0$ such that for any finite set $X$, any $p\in[0,1]$ and function $\lambda\colon 2^X\setminus\{\emptyset\}\to [0,\infty)$ the set
  \[
  \left\{S\subset X\colon \sum_{T\subseteq S} \lambda(T)\ge \sum_{T\in 2^X\setminus\{\emptyset\}} (Lp)^{|T|}\lambda(T)\right\}
  \] 
  is $p$-small.
 \end{conjecture}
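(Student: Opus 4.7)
The plan is to show that Conjecture~\ref{conj:KKweak} is equivalent to Conjecture~\ref{conj:Talagrand_b}, which is already established as equivalent to Conjecture~\ref{conj:smallness}. Both directions are standard normalisation and capping arguments of the kind already used in the paragraph preceding Conjecture~\ref{conj:KKweak}, and the constants only change by a factor of two.

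For the implication Conjecture~\ref{conj:Talagrand_b} $\Rightarrow$ Conjecture~\ref{conj:KKweak}, set $L := 2L_0$, where $L_0$ is the constant from Conjecture~\ref{conj:Talagrand_b}. Given $\lambda\colon 2^X\sm\{\emptyset\}\to[0,\infty)$ and $p\in[0,1]$, write $M := \sum_{T\neq\emptyset}(Lp)^{|T|}\lambda(T)$; the case $M=0$ is vacuous (either $p=0$ or $\lambda\equiv 0$), so assume $M>0$. Define $g(\emptyset):=0$ and $g(T):=\min\{\lambda(T)/M,\,1\}$ for $T\neq\emptyset$, so that $g\colon 2^X\to[0,1]$ and $w(g,Lp)\le 1$. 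Moreover, the set
\[
\cF := \left\{S\in 2^X\colon \sum_{T\subseteq S}\lambda(T)\ge M\right\}
\]
is contained in $\lgr$: if $\lambda(T_0)\ge M$ for some $\emptyset\neq T_0\sub S$, then $g(T_0)=1$; otherwise $g(T)=\lambda(T)/M$ on every nonempty $T\sub S$, and summing over these $T$ still gives at least $1$. If $w(g,1)<1$ then $\lgr=\emptyset$, hence $\cF=\emptyset$ and nothing has to be done; else fix $q\in[Lp,1]$ with $w(g,q)=1$ and apply Conjecture~\ref{conj:Talagrand_b} to obtain $G\sub 2^X\sm\{\emptyset\}$ with $\lgr\sub\lGr$ and $w(G,q/L_0)\le 1$. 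Since $q/L_0\ge 2p$ and $|T|\ge 1$ for each $T\in G$, the elementary estimate $w(G,2p) = \sum_{T\in G}2^{|T|}p^{|T|}\ge 2\,w(G,p)$ yields $w(G,p)\le 1/2$, so $\cF$ is $p$-small.

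For the reverse implication, assume Conjecture~\ref{conj:KKweak} with constant $L_0$ and let $\cF$ be weakly $p$-small. By the standard reduction carried out just above Conjecture~\ref{conj:KKweak} we may take a witness $g\colon 2^X\to[0,1]$ with $g(\emptyset)=0$, $\cF\sub\lgr$ and $w(g,p)\le 1/2$. Apply Conjecture~\ref{conj:KKweak} with $\lambda(T):=g(T)$ for $T\neq\emptyset$ and parameter $p':=p/L_0$: the threshold on the right-hand side is $\sum_{T\neq\emptyset}(L_0 p')^{|T|}\,g(T) = w(g,p)\le 1/2\le 1$, so every $S\in\cF$ (for which $\sum_{T\subseteq S}g(T)\ge 1$) lies in the set supplied by Conjecture~\ref{conj:KKweak}. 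That set is $p'$-small by hypothesis, hence so is $\cF$; thus $\cF$ is $(p/L_0)$-small and Conjecture~\ref{conj:smallness} holds with $L=L_0$.

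No step is difficult; the whole argument is bookkeeping with constants. The one mild subtlety is the cap $\min\{\lambda(T)/M,1\}$ in the forward direction, which is needed because $\lambda$ may be unbounded while $g$ has to take values in $[0,1]$; the short case distinction above confirms that capping preserves the crucial inclusion $\cF\sub\lgr$.
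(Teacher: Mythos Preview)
The paper does not prove this equivalence itself; it only asserts it and refers the reader to~\cite{TalagrandCliques,TalagrandTwoSets} for the details. Your argument supplies the standard normalisation that the paper omits, and it is essentially correct.

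A few minor remarks. In the forward direction you tacitly assume $Lp\le 1$ when picking $q\in[Lp,1]$; if $Lp>1$ and $\lambda\not\equiv 0$ then $M>\sum_{T}\lambda(T)$, so the set in question is empty and there is nothing to prove. The degenerate case $M=0$ is not really ``vacuous'' as you claim: read literally, the set is then all of $2^X$, which is never $p$-small, so this is a defect in the conjecture's wording (one should exclude $p=0$ and $\lambda\equiv 0$) rather than in your argument. In the reverse direction you assert that one may arrange $g(\emptyset)=0$ while keeping both $\cF\subseteq\lgr$ and $w(g,p)\le 1/2$; this is not immediate, but it is also unnecessary: keeping the original $g$, for $S\in\cF$ one has $\sum_{\emptyset\ne T\subseteq S}g(T)\ge 1-g(\emptyset)\ge w(g,p)-g(\emptyset)=\sum_{T\ne\emptyset}p^{|T|}g(T)$, which is precisely the threshold appearing in Conjecture~\ref{conj:KKweak} with $p'=p/L_0$.
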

 For further details we refer the interested reader to~\cite{TalagrandCliques, TalagrandTwoSets}.

To the best of our knowledge, Conjecture~\ref{conj:Talagrand_b} is open, and only some special cases of it have been solved. 
Talagrand~\cite{TalagrandOriginal} proved Conjecture~\ref{conj:Talagrand_b} for functions $g$ whose support $\supp(g):=\{S\colon g(S)\neq 0\}$ is contained in $\binom{X}{1}$ and 
also for functions $g$ so that, for some set $J\subseteq X$, all sets $S$ from $\lgr$ contain at least $(2e)p|J|$ elements from $J$ (see~\cite[Lemma~5.9]{TalagrandOriginal}). Talagrand~\cite{TalagrandOriginal} also 
suggested two further special cases as test cases: when $g$ is constant and supported by the edge sets of the cliques of some fixed order $k$ in the complete graph $K_n$ and when $g$ is supported by a subset of $2$-sets in $X$ (i.e.\ $\supp(g)\subseteq \binom{X}{2}$). The former case was verified by DeMarco and Kahn in~\cite{TalagrandCliques} and the latter was verified by Frankston, Kahn and Park in~\cite{TalagrandTwoSets}. 

 From now on we will always assume that $g(\emptyset)=0$ and that $\emptyset\not\in G$ often without explicitly mentioning it. In the remainder of the Introduction we state more results about Conjecture~\ref{conj:Talagrand_b} alongside some remarks. All this should help to structure the conjecture into easier to handle cases. We provide the proofs in the subsequent sections. 

It will be useful to think of the function $g$ as corresponding to a weighted hypergraph on $X$ (where the weighted edges correspond to the sets from $\supp(g)$).

Our first result allows to reduce the problem to the case when the function $g$ has its support in $\binom{X}{k}$ for some $k\in \NN$.
\begin{theorem}\label{thm:reduction_uniform}
Suppose there exists some $L>1$ such that for all $k\in\NN$ and all finite sets $X$ the following holds. 
Whenever a function $g_k\colon \binom{X}{k} \rightarrow [0,1]$ satisfies $w(g_k,p) \le 1$ for some $p\in[0,1]$, there exists a set 
$G_k\subseteq 2^X$ with $\left<g_k\right>\subseteq \left<G_k\right>$ and  $w\left(G_k,\frac{p}{L}\right) \leq 1$.

 Then the following is true for any finite set $X$. 
If a function $g\colon 2^X \rightarrow [0,1]$ satisfies $w(g,p) = 1$ then there exists a set  
$G\subseteq 2^X$ with $\left<g\right>\subseteq \left<G\right>$ and $w\left(G,\frac{p}{4L}\right) \leq 1$.
\end{theorem}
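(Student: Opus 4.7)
The plan is to decompose $g$ by level, $g = \sum_{k=1}^{n} g_k$ with $g_k$ the restriction of $g$ to $\binom{X}{k}$ (extended by $0$), and to apply the uniform hypothesis separately at each level. Writing $w_k := w(g_k, p)$, one has $\sum_k w_k = 1$. The extra factor of $4$ in $p/(4L)$ is designed to accommodate two factor-$2$ losses: one from a weighted pigeonhole used to ensure coverage of $\langle g\rangle$, and one from a capping procedure needed to feed the uniform hypothesis with $[0,1]$-valued inputs.

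For coverage I would use a weighted pigeonhole. If $S\in\langle g\rangle$, then $\sum_k f_k(S)\ge 1$ with $f_k(S):=\sum_{T\in\binom{S}{k}}g(T)$, and since $\sum_k w_k=1$ at least one $k$ must satisfy $f_k(S)\ge w_k$, placing $S$ in $\langle g_k/w_k\rangle$. Because $g_k/w_k$ need not be $[0,1]$-valued, I would cap it as $h_k := \min(g_k/w_k,1)$ and separately treat the ``heavy'' $k$-sets $H_k := \{T\in\binom{X}{k} : g_k(T)\ge w_k\}$, putting these into $G$ directly. With this split, for each $S$ with $f_k(S)\ge w_k$, either some $T\in\binom{S}{k}\cap H_k$ already covers $S$, or else $S\in \langle h_k\rangle$.

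Next I would apply the uniform hypothesis to $h_k$ at probability $p_k := p/w_k^{1/k}$, which satisfies $w(h_k,p_k)\le w(g_k/w_k, p_k)=1$. This produces $G_k\subseteq 2^X$ with $\langle h_k\rangle\subseteq \langle G_k\rangle$ and $w(G_k, p_k/L)\le 1$. Setting $G := \bigcup_k (G_k\cup H_k)$, the heavy part contributes
$\sum_k |H_k|(p/(4L))^k \le \sum_k w_k\cdot p^{-k}\cdot (p/(4L))^k = \sum_k w_k/(4L)^k$,
a geometric-in-$k$ series bounded by $1/(4L-1)$, well within budget.

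The main obstacle is bounding $\sum_k w(G_k,p/(4L))$. Using the identity $p/(4L) = (p_k/L)\cdot w_k^{1/k}/4$, one has $(p/(4L))^{|T|} = (p_k/L)^{|T|}\cdot w_k^{|T|/k}/4^{|T|}$, which is bounded by $(p_k/L)^{|T|}\cdot w_k/4^k$ whenever $|T|\ge k$; summing then gives at most $\sum_k w_k/4^k$ via $w(G_k,p_k/L)\le 1$. The uniform hypothesis, however, does not guarantee $G_k\subseteq \binom{X}{\ge k}$, and in general minimal coverings may use sets of size smaller than $k$. Reconciling this requires either an auxiliary argument that $G_k$ may be taken inside $\binom{X}{\ge k}$ at acceptable cost, or a more refined size-split of $G_k$ exploiting $w(G_k,p_k/L)\le 1$; in either case the slack provided by the factor $4$ in $p/(4L)$ is precisely what lets the bookkeeping close.
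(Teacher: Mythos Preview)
Your overall architecture (level decomposition, weighted pigeonhole, cap and apply the uniform hypothesis, recombine) matches the paper's, and you have correctly isolated the one real obstacle: the uniform hypothesis returns a cover $G_k$ that may contain sets of size strictly less than $k$, so the inequality $(p/(4L))^{|T|}\le (p_k/L)^{|T|}\cdot w_k/4^k$ is not available and your weight sum over $k$ does not close. However, neither of the two fixes you propose actually works without a new idea. There is no general way to push $G_k$ into $\binom{X}{\ge k}$ at bounded cost, and a size-split gives only $\sum_{j\ge 1}|G_k\cap\binom{X}{j}|(p_k/L)^j\le 1$, hence $w(G_k,p/(4L))\le \sum_{j\ge 1} w_k^{j/k}/4^j$; summing this over $k$ leads to $\sum_{j}4^{-j}\sum_k w_k^{j/k}$, and the inner sum is not controlled (for small $j$ and many nonzero $w_k$ it can be of order $n$). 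The factor $4$ alone does not rescue this.

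The paper closes exactly this gap with a separate device that you are missing. Instead of applying the uniform hypothesis to $h_k$ on $X$, it applies it to $m=2^k$ disjoint copies of $h_k$ living on $X^{(m)}=X_1\dot\cup\cdots\dot\cup X_m$, at probability $p/4$; since $w(h_k^{(m)},p/4)=m\cdot w(h_k,p/2)/2^k=w(h_k,p/2)\le 1$, the hypothesis yields a single $G^{(m)}$ with $w(G^{(m)},p/(4L))\le 1$. Because each $\langle h_k\rangle$ sits entirely inside one $2^{X_i}$, one can extract covers $G_1,\dots,G_m$ with $\sum_i w(G_i,p/(4L))\le 1$, and by pigeonhole among the $m$ copies one of them satisfies $w(G_i,p/(4L))\le 2^{-k}$. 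This produces the geometric factor $2^{-k}$ directly, with no assumption on the sizes of sets in $G_k$, and then $\sum_k w(G_k,p/(4L))\le\sum_k 2^{-k}\le 1$. (Correspondingly, the paper's pigeonhole uses fixed weights $2^{-k}$ and sets $h_k=\min\{2^k g_k,1\}$, rather than your data-dependent $w_k$.)
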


We remark that the assumption $w(g_k,p) \le 1$ above could be replaced by the only apparently  stronger $w(g_k,p) = 1$, since, by monotonicity of the weight function we could pick some $p'$ with 
$w(g_k,p') = 1$ and from the monotonicity of $w\left(G_k,p'/L\right)$ we would obtain $w\left(G_k,p/L\right)\le w\left(G_k,p'/L\right)\le 1$. 
The above reduction allows to work in the uniform setting to attack Conjecture~\ref{conj:Talagrand_b} and simplifies it to the following conjecture:

\begin{conjecture}\label{TalagrandOurFormulation_uniform}\label{conj:Talagrand_c}
    There exists some fixed $L>1$ such that for all finites sets $X$ and all $k\in\NN$, $g: \binom{X}{k} \rightarrow [0,1]$ with $p\in [0,1]$ the following holds. 
 If     $w(g,p) = 1$ then there exists a set $G\subseteq 2^X\setminus\{\emptyset\}$ with $\left<g\right>\subseteq \left<G\right>$ and $w\left(G,\frac{p}{L}\right) \leq 1$.   
\end{conjecture}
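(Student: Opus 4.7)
I would prove Theorem~\ref{thm:reduction_uniform} by decomposing $g$ into its uniform layers, applying the hypothesized uniform case to a suitable rescaling of each layer, and combining the resulting covers via a weighted pigeonhole across layers. Write $g = \sum_{k \geq 1} g_k$ with $g_k := g \cdot \mathbf{1}_{\binom{X}{k}}$ and $c_k := w(g_k,p)$, so $\sum_{k \geq 1} c_k = w(g,p) = 1$. The key observation is a weighted pigeonhole: for any $S \in \langle g \rangle$, writing $s_k(S) := \sum_{T \subseteq S,\,|T|=k} g(T)$ gives $\sum_k s_k(S) \geq 1$, and choosing $\alpha_k := c_k/C$ for a fixed $C \in (1, 3/2)$ yields $\sum_k \alpha_k = 1/C < 1$, so every such $S$ admits some layer $k$ with $s_k(S) > \alpha_k$.

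For each $k$ I would split $\binom{X}{k} = B_k \sqcup M_k$ via $B_k := \{T : g(T) > \alpha_k\}$ and set $\tilde g_k := (g/\alpha_k)\cdot \mathbf{1}_{M_k}$, a function $\binom{X}{k} \to [0,1]$ with $w(\tilde g_k, p) \leq c_k/\alpha_k = C$. A direct case distinction shows that every $S$ with $s_k(S) > \alpha_k$ either contains some $T \in B_k$ (which covers $S$ directly) or satisfies $\sum_{T \subseteq S} \tilde g_k(T) > 1$, i.e.\ $S \in \langle \tilde g_k \rangle$. Setting $p_k := p \cdot C^{-1/k}$ gives $w(\tilde g_k, p_k) \leq 1$, so the assumed uniform hypothesis supplies $G_k^{\mathrm h} \subseteq 2^X$ with $\langle \tilde g_k \rangle \subseteq \langle G_k^{\mathrm h} \rangle$ and $w(G_k^{\mathrm h}, p_k/L) \leq 1$. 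Putting $G := \bigcup_{k \geq 1}(B_k \cup G_k^{\mathrm h})$ gives $\langle g \rangle \subseteq \langle G \rangle$ by the previous steps.

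For the weight estimate at threshold $p/(4L)$, the defining inequality for $B_k$ forces $|B_k| < C/p^k$, so $w(B_k, p/(4L)) < C/(4L)^k$, summing to at most $C/(4L-1) \leq C/3$ when $L \geq 1$. For the hypothesis part, the identity $(p/(4L))^{|T|} = (p_k/L)^{|T|} \cdot (C^{1/k}/4)^{|T|}$ combined with the bound $(C^{1/k}/4)^{|T|} \leq C/4^k$ (valid when $|T| \geq k$) gives $w(G_k^{\mathrm h}, p/(4L)) \leq C/4^k$, summing to at most $C/3$. Choosing $C \in (1, 3/2)$ yields the total $\leq 2C/3 < 1$, the desired bound.

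The main obstacle is ensuring the size lower bound $|T| \geq k$ for every $T \in G_k^{\mathrm h}$: the hypothesis as stated permits arbitrary sizes, yet without this bound the per-layer estimate $C/4^k$ degrades to $C/4$ and the sum over $k$ diverges. Because $\langle \tilde g_k \rangle$ only contains sets of size $\geq k$, every $T \in G_k^{\mathrm h}$ with $|T| < k$ can in principle be replaced by its collection $\{T \cup U : U \in \binom{X \setminus T}{k-|T|}\}$ of $k$-supersets in $X$ without losing coverage. The delicate part of the proof is verifying that the extra weight introduced by such replacements is absorbed by the slack factor $4/C^{1/k}$ one gains in passing from the threshold $p_k/L$ to $p/(4L)$; this is precisely the mechanism by which the constant $4$ in the theorem statement enters, and is the point where uniformity of the layer $\tilde g_k$ is used in an essential way.
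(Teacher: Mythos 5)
Your overall skeleton (decompose $g$ into uniform layers, pigeonhole a layer $k$ in which $S\in\lgr$ picks up enough weight, apply the assumed uniform statement per layer, and sum a geometric series in $k$) is exactly the paper's strategy for Theorem~\ref{thm:reduction_uniform}, and your handling of the pigeonhole (via $\alpha_k=c_k/C$ and the split into $B_k\cup M_k$, with $|B_k|<C/p^k$) is fine. The proof breaks, however, at the point you yourself flag: the hypothesis only returns some $G_k^{\mathrm h}\subseteq 2^X$, with no lower bound on the cardinalities of its members, so the per-layer gain $(C^{1/k}/4)^{|T|}\le C/4^k$ is simply not available, and your proposed repair does not work. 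Replacing a small $T\in G_k^{\mathrm h}$ by all of its $k$-element supersets preserves coverage but can blow up the weight by a factor of order $\binom{|X|-|T|}{k-|T|}p^{k-|T|}$, which is unbounded in $|X|$; concretely, if $g_k$ is supported on a single $k$-set with $p\approx 1$, the uniform hypothesis is free to return a single vertex $\{x\}$ as its cover, and the replacement then has weight about $\binom{|X|-1}{k-1}(p/4L)^{k}$. The slack you invoke between the thresholds $p_k/L$ and $p/(4L)$ is only a constant factor \emph{per element} and cannot absorb a factor growing with $|X|$, so this is a genuine gap, not a technicality.

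The paper closes exactly this gap by a different mechanism: a tensor/power trick (Lemma~\ref{lem:PowerTrick} together with Definition~\ref{def:Xm}). Instead of rescaling by $\alpha_k$, one caps $h_k:=\min\{2^k g_k,1\}$, forms $m=2^k$ disjoint copies $h_k^{(m)}$ on $X^{(m)}$ with $w\bigl(h_k^{(m)},p/4\bigr)\le 1$, and applies the uniform hypothesis on $X^{(m)}$. Since the $\left<h_k\right>$-copies live on disjoint ground sets, the resulting cover splits into $m$ disjoint covers, and one of them must have weight at most $1/m=2^{-k}$ at threshold $p/(4L)$. This delivers the geometric decay $2^{-k}$ per layer with no assumption whatsoever on the sizes of the sets in the cover, which is precisely what your argument is missing; if you graft this lemma onto your layer functions (applying the hypothesis to $m$ disjoint copies of $\tilde g_k$, or directly to $h_k^{(m)}$ as the paper does), the rest of your write-up goes through and recovers the constant $4L$.
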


In particular this allows us to rewrite $w(g,p)=1$ as 
$p = \left(\sum_{S\in \binom{X}{k}} g_k\left(S\right)\right)^{-\frac{1}{k}}$ (cf.\ Definition~\ref{def:weight_fcts}).

A first natural approach to tackle Conjecture~\ref{conj:Talagrand_c} is to `(de-)randomize' the function $g$ by interpreting its weights (or multiples thereof) as probabilities and trying to construct a set $G$ with $G\subseteq \supp(g)$. Our next result establishes the following.

\begin{theorem}\label{thm:randomized}
Let $p\in[0,1]$, $n\ge k\in\NN$, $X$ be an $n$-element set and $g\colon \binom{X}{k} \rightarrow [0,1]$ a function. If $ w(g,p) = 1$ then there exists $G\subseteq \supp(g)$ with $\left<g\right>\subseteq \left<G\right>$  and  $w\left(G,\frac{p}{4\cdot n^{\frac{1}{k}}}\right) \leq 1$. In particular, for $k\ge C \log_2(n)$ with some $C>0$, it holds that if $ w(g,p) = 1$ then there exists $G\subseteq \supp(g)$ with $\left<g\right>\subseteq \left<G\right>$  and $w\left(G,\frac{p}{4\cdot 2^{1/C}}\right) \leq 1$.
\end{theorem}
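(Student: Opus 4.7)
My plan is to (de-)randomise $g$ by using it as an inclusion probability and apply the probabilistic method. Fix $L := 4n^{1/k}$ and $p' := p/L$, and form a random $G$ by including each $T \in \binom{X}{k}$ independently with probability $q_T := \min\{\alpha g(T), 1\}$ for a parameter $\alpha$ to be tuned (of size $\Theta(n)$). By construction $G \subseteq \supp(g)$, so it suffices to show that the events $\{\lgr \subseteq \lGr\}$ and $\{w(G,p') \leq 1\}$ occur simultaneously with positive probability.

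For the weight, linearity together with the hypothesis $w(g,p) = 1$ (which forces $\sum_T g(T) = p^{-k}$) yields $\Exp[w(G,p')] \leq \alpha (p'/p)^k = \alpha/L^k = \alpha/(4^k n)$. Choosing $\alpha$ of order $n$ makes this a constant strictly less than $1$, so Markov bounds $\PP[w(G,p') > 1]$ below $1/2$. For the covering, fix any $S \in \lgr$, so that $\sum_{T \subseteq S} g(T) \geq 1$. Either some $T \subseteq S$ satisfies $\alpha g(T) \geq 1$ and lies in $G$ deterministically, or all the relevant $q_T$ equal $\alpha g(T)$, and independence together with $1-x \leq e^{-x}$ gives $\PP[S \notin \lGr] \leq \exp(-\alpha \sum_{T \subseteq S} g(T)) \leq e^{-\alpha}$. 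A crude union bound over all $2^n$ subsets of $X$ then yields $\PP[\lgr \not\subseteq \lGr] \leq 2^n e^{-\alpha}$, which is also below $1/2$ provided $\alpha \geq n \ln 4$, say. A good $G$ therefore exists by a final union bound on the two failure events.

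The second assertion is a direct corollary: $k \geq C \log_2 n$ forces $n^{1/k} \leq 2^{1/C}$, hence $L \leq 4 \cdot 2^{1/C}$, a constant independent of $n$. The only real technical content---and the only place where the shape of the bound is forced---is the balancing act: the union bound over $2^n$ subsets demands $\alpha = \Omega(n)$, whereas the expectation bound demands $L^k \gtrsim \alpha$; reconciling these inequalities produces exactly the factor $n^{1/k}$ in the threshold. This also suggests why such a straightforward random construction inside $\supp(g)$ cannot do better than $n^{1/k}$, so any serious improvement would presumably require sets $G$ that are not confined to $\supp(g)$.
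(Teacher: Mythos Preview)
Your proposal is correct and is essentially the same argument as the paper's: the paper also includes each $T\in\supp(g)$ independently with probability $\min\{c\,g(T),1\}$, bounds $\PP[S\notin\lGr]\le e^{-c}$ and takes a union bound over $2^n$ sets, computes $\Exp[w(G,p/L)]\le c/L^k$, and then combines the two with $c=n+1$ and $L=4n^{1/k}$. The only cosmetic difference is that the paper finishes via the conditional-expectation inequality $\Exp[w(G,p')\mid \lgr\subseteq\lGr]\le \Exp[w(G,p')]/\PP[\lgr\subseteq\lGr]\le 1$ rather than a union bound on the two failure events; your choice $\alpha=n\ln 4$ makes both failure probabilities strictly below $1/2$ (indeed $(\ln 4)/4^k<1/2$ and $2^n e^{-\alpha}=2^{-n}\le 1/2$), so the union bound also works.
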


We remark that the first statement in Theorem~\ref{thm:randomized} is asymptotically optimal in the sense that the bound $p/n^{\frac{1}{k}}$ cannot be substantially improved as the following example certifies. Let $n$ be even and define $g=\binom{n/2}{k}^{-1}\cdot \one_{\binom{X}{k}}$. Then $\lgr=\{S\colon |S|\ge n/2\}$ and from $w(g,p)=1$ it follows that $p\approx 1/2$, while $G\subseteq \supp(g)=\binom{X}{k}$ and $\lgr\subseteq\lGr$ imply that $|G|\ge n/(2k)$. Thus, if  $w(G,p/L)=|G|(p/L)^k\le 1$ then $L=\Omega(n^{1/k})$.

Additionally we remark that the second statement of Theorem~\ref{thm:randomized} can also be proven with the methods used in \cite{TalagrandCliques} where it is proven for the clique case.
Furthermore we remark that it is possible with some additional care to improve the first result to $w\left(G,\frac{p}{4\cdot \log_2\left(m\right)^{\frac{1}{k}}}\right) \leq 1$ for $m$ being the number of minimal elements in $\lgr$, which also improves the second result to $k\ge C \log_2(\log_2\left(m\right))$.

The following proposition deals with the case when $\lgr$ only contains large enough sets. Its first part was proved by Talagrand~\cite[Lemma~5.9]{TalagrandOriginal}. 
For the sake of completeness we provide the proof of the first part as well.
\begin{proposition}\label{prop:volume}
Let $n$, $k\in\NN$ with $n\ge k$, let $p\in[0,1]$, let $V\subseteq X$, where $X$ is an $n$-element set and $V\neq \emptyset$. If $g\colon \binom{X}{k}\to[0,1]$ is a function with $w(g,p)=1$  such that  every $S\in\lgr$ satisfies $|S\cap V|\ge \frac{ep}{L}|V|$ (for some $L>0$) then there exists $G\subseteq 2^X$ with $\lgr\subseteq\lGr$ and $w(G,\frac{p}{L})\le 1$. 

In particular, if we have a function $g\colon \binom{V}{k}\to[0,1]$ which is constant on its support, satisfying $w(g,p)=1$ and $|\supp(g)|\ge \left(\frac{e^2}{L}\right)^k\binom{|V|}{k}$, then there exists $G\subseteq 2^X$ with $\lgr\subseteq\lGr$ and $w(G,\frac{p}{L})\le 1$. 
\end{proposition}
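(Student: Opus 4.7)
For the first part, the plan is to take $G$ to be a suitable uniform layer inside $V$. Specifically, set $m := \lceil \frac{ep}{L}|V|\rceil$ and let $G := \binom{V}{m}$. By the hypothesis every $S\in\lgr$ satisfies $|S\cap V|\ge \frac{ep}{L}|V|\ge m$ (after a harmless rounding argument if $m$ is not an integer, noting that $|S\cap V|$ is an integer), so $S$ contains at least one $m$-subset of $V$, which gives $\lgr\subseteq\lGr$. It remains to bound $w(G,p/L) = \binom{|V|}{m}(p/L)^m$. Using the standard estimate $\binom{|V|}{m}\le (e|V|/m)^m$ together with $m\ge \frac{ep}{L}|V|$, one gets
\[
w(G,p/L)\le \left(\frac{e|V|}{m}\cdot\frac{p}{L}\right)^m \le 1,
\]
which finishes the first part.

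For the second part, the strategy is to deduce the volume-type lower bound on $|S\cap V|$ from the size of $\supp(g)$ and then invoke the first part. Let $c$ be the constant value of $g$ on its support, so $w(g,p) = c\,|\supp(g)|\,p^k = 1$, i.e.\ $1/c = |\supp(g)|\, p^k$. For any $S\in\lgr$ we have $\sum_{T\subseteq S} g(T)\ge 1$, hence the number of $T\in\supp(g)$ with $T\subseteq S$ is at least $1/c$. Since $\supp(g)\subseteq\binom{V}{k}$, these $T$'s are all $k$-subsets of $S\cap V$, so
\[
\binom{|S\cap V|}{k} \ge |\supp(g)|\,p^k \ge \left(\frac{e^2}{L}\right)^k \binom{|V|}{k}\, p^k.
\]
Applying the bounds $\binom{|S\cap V|}{k}\le (e|S\cap V|/k)^k$ and $\binom{|V|}{k}\ge (|V|/k)^k$ and taking $k$-th roots one obtains $|S\cap V|\ge \frac{ep}{L}|V|$. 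The first part now yields the desired $G$.

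The main step is really the first part, which is a direct combinatorial construction, so I don't expect a serious obstacle — the only thing to be a little careful about is the rounding in the definition of $m$ (in particular ensuring $m\ge 1$, which follows once $\frac{ep}{L}|V|>0$; degenerate regimes where $\frac{ep}{L}|V|< 1$ can be handled by taking $m=1$ and observing that then $w(G,p/L)=|V|p/L\le 1$ forces the statement to be trivial or vacuous, in which case $G$ can be chosen as $\{V\}$ or even $\emptyset\in\lgr$ already contradicts nontriviality).
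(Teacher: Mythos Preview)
Your proof is correct and follows essentially the same approach as the paper: for the first part you both take $G=\binom{V}{\lceil ep|V|/L\rceil}$ and bound the weight via $\binom{|V|}{m}\le (e|V|/m)^m$, and for the second part you both deduce $|S\cap V|\ge \tfrac{ep}{L}|V|$ from the support-size hypothesis and then invoke the first part. The only cosmetic difference is that the paper introduces an auxiliary integer $s$ with $\binom{s}{k}^{-1}\le c<\binom{s-1}{k}^{-1}$ to witness $|S\cap V|\ge s\ge \tfrac{ep}{L}|V|$, whereas you bound $\binom{|S\cap V|}{k}$ directly; also, your worries about the degenerate regime are unnecessary since the ceiling already handles $m\ge 1$ and the weight bound $(e|V|p/(mL))^m\le 1$ goes through verbatim.
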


\begin{proof}[Proof of Proposition~\ref{prop:volume}]
    By taking 
    \begin{alignat*}{100}
        G=\binom{V}{\left\lceil\frac{e\cdot p}{L}\cdot \left|V\right|\right\rceil}
    \end{alignat*}
    we get $\left<g\right>\subseteq \left<G\right>$ since all $S\in\left<g\right>$ intersect $V$ in at least $\left\lceil\frac{e\cdot p}{L}\cdot \left|V\right|\right\rceil$ elements and we obtain 
    \begin{alignat*}{100}
        w\left(G,\frac{p}{L}\right)= \binom{\left|V\right|}{\left\lceil\frac{e\cdot p}{L}\cdot \left|V\right|\right\rceil}\cdot \left(\frac{p}{L}\right)^{\left\lceil\frac{e\cdot p}{L}\cdot \left|V\right|\right\rceil}\leq \left(\frac{e\cdot \left|V\right|\cdot p}{{\left\lceil\frac{e\cdot p}{L}\cdot \left|V\right|\right\rceil}\cdot L}\right)^{\left\lceil\frac{e\cdot p}{L}\cdot \left|V\right|\right\rceil} \leq \left(\frac{e\cdot \left|V\right|\cdot p}{{\frac{e\cdot p}{L}\cdot \left|V\right|}\cdot L}\right)^{\left\lceil\frac{e\cdot p}{L}\cdot \left|V\right|\right\rceil} = 1.
    \end{alignat*}

    Now let $g\colon \binom{V}{k}\to[0,1]$ be constant on its support, i.e.\ $g(T)=c$ for all $T\in\supp(g)$ for some $c>0$ and satisfy  $w(g,p)=1$ and $|\supp(g)|\ge \left(\frac{e^2}{L}\right)^k\binom{|V|}{k}$. 
    Let $s\in\NN$, $s\ge k$,  be such that $\binom{s}{k}^{-1}\le c< \binom{s-1}{k}^{-1}$. Then $S\in\lgr$ has at least $s$ elements in $V$. From $w(g,p)=1$ we obtain
    \begin{alignat*}{100}
        p=\left(|\supp(g)|c\right)^{-1/k}\le \frac{\binom{s}{k}^{1/k}}{\left(\left(\frac{e^2}{L}\right)^k\binom{|V|}{k}\right)^{1/k}}\le 
        \frac{(es/k)}{(e^2/L)\cdot (|V|/k)}=\frac{Ls}{e|V|},
    \end{alignat*}
    from which $\frac{ep}{L}|V|\le s$ follows. Hence, we can apply the first statement of the proposition and get the conclusion.
\end{proof}

 A $k$-uniform hypergraph $H\subseteq \binom{X}{k}$ on the vertex set $X$ is called \emph{linear}, if any two distinct vertices lie in at most one (hyper-)edge. For a set $V\subseteq X$, we denote through $H[V]$ the \emph{induced} hypergraph $H\cap\binom{V}{k}$ on $V$.
  For two distinct vertices $x$, $y\in X$, the \emph{codegree of $x$ and $y$ in $H$} will be denoted by $\deg_H(x,y)$ (or simply $\deg(x,y)$ when $H$ is clear from the context) and defined through $\deg(x,y):=|\{e\colon e\in H \text{ and } e\ni x, y\}|$. The \emph{maximum codegree of $H$} is  $\Delta_2(H):=\max_{\{x,y\}\in\binom{X}{2}}\deg_H(x,y)$. Our last result generalizes the result of Frankston, Kahn and Park~\cite{TalagrandTwoSets} when $\supp(g)\subseteq \binom{X}{2}$ to the case when $H:=\supp(g)$ is a `nearly' linear $k$-uniform hypergraph.
\begin{theorem}\label{TheoremSimpleHypergraphs}
There exists some constant $C>1$ such that the following holds. Let $X$ be a finite set and let  $k$, $c\in\NN$ with $|X|\ge k$, $p\in[0,1]$ and  let $g\colon \binom{X}{k} \rightarrow [0,1]$ be a function such that 
    \begin{alignat*}{100}
       \Delta_2(\supp(g))\le c^k.
    \end{alignat*}     
If $w(g,p) = 1$ then there exists $G\subseteq 2^X$ with $\lgr\subseteq\lGr$ and 
\[
w\left(G,\frac{p}{C\cdot c^2}\right) \leq 1.
\]
\end{theorem}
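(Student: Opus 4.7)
The plan is to combine a dyadic reduction with a two-regime case analysis controlled by the codegree bound. First I would decompose $g$ into dyadic levels $H_i := \{T : g(T)\in (2^{-i-1}, 2^{-i}]\}$. Since $w(g,p) = 1$ splits across at most $O(\log |X|)$ nontrivial levels, a pigeonholing argument on the covers (taking unions and adding weights) reduces the problem to the case where $g = c_0\,\mathds{1}_H$ is constant on its support $H$, losing only a multiplicative factor absorbed into $C$. In this reduced setting, $|H| = 1/(c_0 p^k)$ and $\lgr$ consists of all $S$ containing at least $\lceil 1/c_0\rceil$ edges of $H$.

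Next I would split into two regimes according to the scale $c_0$. In the benign regime $c_0 \ge (Cc^2)^{-k}$, the naive choice $G := H$ already works: indeed $\lgr\subseteq \lHr$ and $w(G,\,p/(Cc^2)) = |H|\bigl(p/(Cc^2)\bigr)^k = 1/\bigl(c_0(Cc^2)^k\bigr)\le 1$. In the difficult regime $c_0 < (Cc^2)^{-k}$, every $S\in\lgr$ contains at least $1/c_0 > (Cc^2)^k$ edges of $H$, and combining this with the codegree hypothesis through the pair-counting inequality $\binom{k}{2}\cdot|\{T\in H : T\subseteq S\}|\le \binom{|S|}{2}\,c^k$ forces $|S|$ to be large, roughly $|S|\gtrsim k\sqrt{m_S/c^k}$ where $m_S$ is the number of $H$-edges inside $S$. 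I would then apply Proposition~\ref{prop:volume} with $V$ taken as the vertex set of $H$ (or an appropriately shrunken subset if $H$ spans few vertices), converting the size bound into the desired cover. If $|V(H)| \le (Cc^2/4)^k$, then Theorem~\ref{thm:randomized} applied on $V(H)$ already suffices, so the real work is when $H$ spans many vertices yet has low codegree.

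The main obstacle is closing the constants in the hard regime. The pair-counting yields $|S|\gtrsim k/(c_0^{1/2} c^{k/2})$ which, after plugging into Proposition~\ref{prop:volume}, tends to leave a $c^{k/2}$ penalty rather than the promised $c^2$. Overcoming this gap is the heart of the proof, and I expect it will require interpolating: building $G$ from sets of various intermediate sizes between $2$ and $k$, rather than only $k$-sets (as in $G = H$) or only pairs (as in a direct FKP-style reduction). An iterative peeling of vertices of excessive degree in $H$, designed so that the effective codegree count improves at every scale, is the most plausible route---closely related to the FKNP spreadness argument applied to the low-codegree structure of $H$, and relying crucially on the fact that the near-linearity assumption $\Delta_2(H)\le c^k$ (as opposed to a weaker bound) constrains how concentrated the weight of $g$ can be on any fixed pair.
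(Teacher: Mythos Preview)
Your proposal correctly locates the difficult regime but does not contain the ideas needed to close it, and the preliminary dyadic reduction is also flawed as stated.

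On the dyadic reduction: the number of nontrivial levels is not $O(\log|X|)$ --- $g$ may take arbitrarily small positive values --- and even were it so, pigeonholing over that many levels would cost a $\log|X|$ factor in $C$, not a constant. The paper handles this (Theorem~\ref{TheoremUnWeightedToWeighted}) by attaching to each dyadic level $i$ a scaled function $10\ell_i^{-1}g_i$ together with a level-dependent threshold $J_i=\max\{1,\ell_i^{-1}/2^{i-1}\}$, and proving for the constant-weight linear case (Theorem~\ref{TheoremLinearHypergraphsUnweighted}) a cover whose weight decays like $L^{-\sqrt{J_ir_i}/2^7}$; the arithmetic then yields $w(G_i,\cdot)\le O(1/i^2)$, which is summable. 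Merely splitting $\langle g\rangle$ as a union of $\langle g_i\rangle$ and covering each with weight $\le 1$ does not give a constant.

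On the hard regime: you are right that pair-counting alone leaves a $c^{k/2}$ penalty and that Proposition~\ref{prop:volume} is too blunt. The paper does \emph{not} proceed via Proposition~\ref{prop:volume} or via a spreadness/peeling iteration. It first reduces general $c$ to $c=1$ by colouring $\supp(g)$ into at most $(2c)^k$ linear $k$-uniform hypergraphs (the auxiliary graph on $\supp(g)$ whose edges are pairs of hyperedges sharing two vertices has maximum degree below $\binom{k}{2}c^k$), and then invokes Lemma~\ref{lem:PowerTrick} with $m=(2c)^k$ copies to convert the factor-$(2c)^k$ union loss into only a factor $(2c)^2$ in $p$. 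The linear case $c=1$ is then handled by the central structural lemma, Theorem~\ref{TheoremLinearHypergraphsUnweighted}: the cover $G$ is built from unions of $b_i$ vertex-disjoint stars with $L_i$ edges each, over a dyadic ladder $(b_i,L_i)$, and one shows every $S\in\langle g\rangle_{J,L}$ contains such a configuration by greedily extracting maximal stars. Sets $S$ outside $\langle g\rangle_{1,L}$ (those with large weighted vertex-degree sum) are covered separately via the singleton case, Proposition~\ref{TheoremSingletons}. Your ``intermediate sizes'' intuition is in the right direction, but the concrete object --- disjoint stars at multiple scales, following Frankston--Kahn--Park for $k=2$ rather than the FKNP spreadness machinery --- is absent from the proposal.
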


As a corollary we obtain that Conjecture~\ref{conj:Talagrand_b} holds when $g$ is supported only by $k$-APs, where a $k$-AP stands for a $k$-term arithmetic progression, i.e.\ a set of the form $\{a,a+d,\ldots,a+(k-1)d\}$ (with $d\neq0$). Indeed, this follows since the maximum codegree of the $k$-uniform hypergraph of $k$-APs is at most $(k-1)^2$.

\begin{corollary}\label{cor:k-APs}
There exists a constant $C>1$ such that for all $n$, $k\in \NN$ the following holds.
If $X=[n]$ and a function $g\colon \binom{X}{k}\to [0,1]$ is such that $\supp(g)$ consists only of $k$-APs and $w(g,p)=1$ for some $p\in[0,1]$, then $\lgr$ is $p/C$-small.
\end{corollary}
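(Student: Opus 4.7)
The strategy is essentially a direct application of Theorem~\ref{TheoremSimpleHypergraphs}, so the only real content is the codegree bound for the $k$-uniform hypergraph $H$ consisting of all $k$-APs in $[n]$. I would fix two distinct elements $x<y$ of $[n]$ and count the $k$-APs $\{a,a+d,\ldots,a+(k-1)d\}$ containing both. Writing $y-x=jd$ with $j\in\{1,\ldots,k-1\}$, the index $j$ determines $d=(y-x)/j$ whenever this is an integer, and the position of $x$ in the progression can then take any of $k-j$ values. Summing yields
\[
\Delta_2(H)\;\le\;\sum_{j=1}^{k-1}(k-j)\;=\;\binom{k}{2}.
\]
For every $k\ge 2$ one has $\binom{k}{2}\le 2^k$, hence $\Delta_2(\supp(g))\le 2^k$ and Theorem~\ref{TheoremSimpleHypergraphs} applies with $c=2$.

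It then remains to unpack what the theorem provides. It furnishes $G\subseteq 2^X$ with $\lgr\subseteq \lGr$ and $w(G,p/(4C'))\le 1$, where $C'$ is the absolute constant of Theorem~\ref{TheoremSimpleHypergraphs}. To convert this to the notion of $p$-smallness in Definition~\ref{def:p-small}, which requires $w(G,\cdot)\le 1/2$, I would use that $w(G,q/2)\le \tfrac{1}{2}\,w(G,q)$ whenever $\emptyset\notin G$ (each surviving term picks up a factor of at most $1/2$). Setting $C:=8C'$ then gives $w(G,p/C)\le 1/2$, so $\lgr$ is $p/C$-small. The boundary case $k=1$ carries no codegree information and is in any case already covered by Talagrand's result for $\supp(g)\subseteq \binom{X}{1}$ recalled in the Introduction.

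I do not anticipate a genuine obstacle: the corollary reduces in a single line to the elementary count $\binom{k}{2}\le 2^k$, and the remainder is mechanical bookkeeping between the two normalizations of smallness and the constant from Theorem~\ref{TheoremSimpleHypergraphs}.
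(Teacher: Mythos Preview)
Your proposal is correct and follows the same approach as the paper, which simply notes that the codegree of the $k$-AP hypergraph is at most $(k-1)^2$ and invokes Theorem~\ref{TheoremSimpleHypergraphs}; your sharper bound $\binom{k}{2}$ and the explicit handling of the $1/2$-normalization and the $k=1$ case are refinements the paper leaves implicit.
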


In the subsequent sections we provide proofs of Theorems~\ref{thm:reduction_uniform},~\ref{thm:randomized} and~\ref{TheoremSimpleHypergraphs} respectively.

\section{Proof of Theorem~\ref{thm:reduction_uniform}}
The idea behind the proof of Theorem~\ref{thm:reduction_uniform} is to decompose the given function $g\colon 2^X\to[0,1]$ into functions $g_k\colon \binom{X}{k}\to[0,1]$. Knowing that $\left<g_k\right>$ are contained in $\left< G_k\right>$, we will need to establish the relation between $\lgr$ and $\left<\cup_{k=1}^n G_k\right>$. We will also need an auxiliary lemma, Lemma~\ref{lem:PowerTrick} below, which allows us to deduce that, under quite natural conditions, we can find $G_k$ so that $\left< g_k\right> \subseteq \left<G_k\right>$ and $w(G_k,\frac{p}{c L})\le \frac{1}{c^k}$, which will help us to bound the weight of $\cup_{k=1}^n G_k$.

 We start with the following definition.
\begin{definition}\label{def:Xm} 
    For a set $X$, numbers $k$, $m\in\NN$ with $|X|\ge k$ and a function $g\colon \binom{X}{k}\rightarrow [0,\infty)$, let $\left(X_i\right)_{i=1}^m$ be distinguishable and disjoint copies of $X$ and $g_i\colon \binom{X_i}{k} \rightarrow [0,\infty)$ be copies of $g$.
       We define:
    \begin{alignat*}{100}
        X^{(m)} & := \dot\bigcup_{i=1}^m X_i, \\
        g^{(m)} & \colon \binom{X^{(m)}}{k}\rightarrow [0,\infty),\\
        g^{(m)}(S)& := \sum_{i=1}^m \mathds{1}_{\{S \subseteq X_i\}} g_i(S).
    \end{alignat*}
\end{definition}

The following lemma allows us to infer that, under quite natural conditions, the weight of a `covering' set $G$ `scales' with $p/c$ at the `rate' $k$ even though $G$ need not consist only of sets of cardinality at least $k$.

\begin{lemma}\label{lem:PowerTrick}
Let $p\in[0,1]$, $L>0$, $c$, $k\in\NN$, let $X$ be a finite set with $|X|\ge k$ and $g\colon \binom{X}{k}\to[0,\infty)$ a function. 
Set $m=c^k$ and suppose that $w\left(g^{(m)},\frac{p}{c}\right)\le 1$ and there exists a set $G^{(m)}\subseteq 2^{X^{(m)}}$ such that $\left<g^{(m)}\right> \subseteq \left<G^{(m)}\right>$ and 
$w\left(G^{(m)},\frac{p}{c\cdot L}\right)\le 1$.  Then $w\left(g,p\right)\le 1 $ and there exists a set  $G\subseteq 2^X$  with $\left<g\right>\subseteq \left<G\right>$ and  $w\left(G,\frac{p}{c\cdot L}\right) \leq \frac{1}{c^k}$.
\end{lemma}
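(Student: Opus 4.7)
The plan is to exploit the symmetry among the $m = c^k$ disjoint copies of $X$ sitting inside $X^{(m)}$. First I would verify the scaling identity
\[
w\bigl(g^{(m)}, p/c\bigr) = \sum_{i=1}^m \sum_{T \in \binom{X_i}{k}} g_i(T)\,(p/c)^k = m\cdot (p/c)^k \cdot \sum_{T \in \binom{X}{k}} g(T) = p^k \sum_T g(T) = w(g,p),
\]
where the crucial cancellation $m\,(p/c)^k = p^k$ uses $m = c^k$. Combined with the hypothesis $w(g^{(m)}, p/c) \le 1$, this immediately yields $w(g,p) \le 1$ with no extra work.

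For the existence of $G$, the key move is to embed each $S \in \left<g\right>$ into one single copy of $X$ inside $X^{(m)}$. Let $\pi_i \colon X_i \to X$ denote the canonical bijection and $\sigma_i := \pi_i^{-1}$. Define
\[
G_i := \{T \in G^{(m)} : T \subseteq X_i\}, \qquad H_i := \{\pi_i(T) : T \in G_i\} \subseteq 2^X.
\]
I would then check $\left<g\right> \subseteq \left<H_i\right>$ for every $i$: given $S \in \left<g\right>$, the set $\sigma_i(S) \subseteq X_i$ satisfies $\sum_{T \subseteq \sigma_i(S)} g^{(m)}(T) = \sum_{T\subseteq S} g(T) \ge 1$, so $\sigma_i(S) \in \left<g^{(m)}\right> \subseteq \left<G^{(m)}\right>$; hence some $T \in G^{(m)}$ lies inside $\sigma_i(S) \subseteq X_i$, which forces $T \in G_i$ and $\pi_i(T) \subseteq S$.

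Finally I would close with an averaging argument: each $T \in G^{(m)}$ belongs to at most one $G_i$, and $\pi_i$ preserves cardinalities, so
\[
\sum_{i=1}^m w\!\left(H_i, \tfrac{p}{cL}\right) \;\le\; \sum_{T \in G^{(m)}} \left(\tfrac{p}{cL}\right)^{|T|} \;=\; w\!\left(G^{(m)}, \tfrac{p}{cL}\right) \;\le\; 1,
\]
so some index $i^{*}$ achieves $w(H_{i^{*}}, p/(cL)) \le 1/m = 1/c^k$, and I set $G := H_{i^{*}}$. The only conceptual subtlety, which I expect to be the main (very mild) obstacle, is realising that $S$ must be lifted via the single-copy embedding $\sigma_i(S)$ rather than via the full preimage $\pi^{-1}(S)$: the former confines the covering $T$ to one $X_i$, which is exactly what lets the factor $1/c^k$ survive projection back to $X$.
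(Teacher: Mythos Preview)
Your proposal is correct and follows essentially the same route as the paper's proof: both establish $w(g,p)\le 1$ via the scaling identity $w(g^{(m)},p/c)=m\cdot w(g,p/c)=w(g,p)$, then restrict $G^{(m)}$ to each copy $X_i$, and use averaging over the $m=c^k$ copies to find one with weight at most $1/c^k$ before projecting back to $X$. The paper phrases the projection step more tersely (``since $\langle g_i\rangle$ is a copy of $\langle g\rangle$''), while you spell out the bijections $\pi_i,\sigma_i$ explicitly; the underlying argument is identical.
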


\begin{proof} We have
    \begin{alignat*}{100}
         w\left(g,p\right)= m\cdot w\left(g,\frac{p}{c}\right)  =  w\left(g^{(m)},\frac{p}{c}\right)\le 1,
    \end{alignat*}
 where  the first equality follows from the definition of $w(g,p)$, $m=c^k$ and since $\supp(g)\subseteq\binom{X}{k}$ (cf.\ Definition~\ref{def:weight_fcts}), while the second equality holds since $g_i$'s are copies of $g$.

  Observe that all $\left<g_i\right>\subseteq 2^{X_i}$ ($i\in[m]$) are  disjoint and contained in $\left<g^{(m)}\right>$. 
Hence, there exist
   pairwise disjoint $G_i\subseteq 2^{X_i}\cap G^{(m)}$ with  $\left<g_i\right>\subseteq \left<G_i\right>$ for every $i\in[m]$.  Therefore it follows 
    \begin{alignat*}{100}
        \sum_{i=1}^m w\left(G_i,\frac{p}{c\cdot L}\right) \leq w\left(G^{(m)},\frac{p}{c\cdot L}\right) \leq 1. 
    \end{alignat*}
Thus there exists an $i\in[m]$ such that $w\left(G_i,\frac{p}{c\cdot L}\right)\leq \frac{1}{m}=\frac{1}{c^k}$. 
Since $\left<g_i\right>$ is a copy of $\left<g\right>$ there also exists a required $G$.
\end{proof}

Now we are in position to prove Theorem~\ref{thm:reduction_uniform}.

\begin{proof}[Proof of Theorem~\ref{thm:reduction_uniform}]
Let $X$ be any fixed finite set and let $g\colon 2^X\to[0,1]$ be any function with $w(g,p)=1$. We can write $g=\sum_{k=1}^n g_k$,  where 
$g_k\colon \binom{X}{k} \rightarrow [0,1]$  are defined through $g_k(T):=g(T)\cdot \one_{\left\{T \in \binom{X}{k}\right\}}$ for all $T\in\binom{X}{k}$.

Next we define, for every $k\in[n]$, a function 
 $h_k\colon  \binom{X}{k} \to [0,1]$ by setting $h_k(T):=\min\left\{2^k\cdot g_k(T),1\right\}$ for all $T\in \binom{X}{k}$. We then have  that        
    \begin{alignat*}{100}
w\left(h_k,\frac{p}{2}\right)\le 2^k\cdot w\left(g_k,\frac{p}{2}\right) = \frac{2^k}{2^k}\cdot w(g_k,p) \leq w(g,p) =1,
    \end{alignat*}
where  we use the definition of $w(g,p)$ (cf.\ Definition~\ref{def:weight_fcts}),  $h_k\le 2^k\cdot g_k$ and the assumption of the theorem that $w(g,p) = 1$.

 We choose $c=2$ and set $m=c^k$. By construction (cf.\ Definition~\ref{def:Xm}), we have $h_k^{(m)}\colon \binom{X^{m}}{k}\to[0,1]$ and 
$w\left(h_k^{(m)},\frac{p}{4}\right)=w\left(h_k^{(m)},\frac{p/2}{2}\right)=\frac{m}{2^k}\left(h_k,p/2\right)=\left(h_k,p/2\right)\le 1$.  In the following we assume w.l.o.g.\ that, for every $k\in[n]$, $w(h_k,1)\ge 1$ (as otherwise $\left<h_k\right>=\emptyset$ and we can set $G_k=\emptyset$ in what follows). 
The assumption of Theorem~\ref{thm:reduction_uniform} on $h_k^{(m)}$ (as $g_k$) and $p$ (as $p/4$) makes Lemma~\ref{lem:PowerTrick} applicable to $h_k$ (as $g$), $p/2$ (as $p$) and $c=2$. 
Hence, there exists a set $G_k\subseteq 2^X$ with $\left<h_k\right>\subseteq \left<G_k\right>$ and  $w\left(G_k,\frac{p}{4L}\right) =w\left(G_k,\frac{p/2}{2L}\right) \leq \frac{1}{2^k}$. 
    Therefore we obtain:
    \begin{alignat}{100}\label{eq:weight}
        w\left(\bigcup_{k=1}^n G_k,\frac{p}{4 L}\right)\leq \sum_{k=1}^n w\left(G_k,\frac{p}{4 L}\right)< \sum_{k=1}^\infty \frac{1}{2^k} = 1
    \end{alignat}
    On the other hand, for every $S\in \left<g\right>$ we have:
    \begin{alignat*}{100}
        \sum_{k=1}^n \sum_{T\subseteq S} g_k(T) = \sum_{T\subseteq S} g(T) \geq 1
    \end{alignat*}
   By the pigeonhole principle there is an index $k\in[n]$ such that $\sum_{T\subseteq S} g_k(T) \geq \frac{1}{2^k}$ and hence (since $h_k(T)=\min\left\{2^k\cdot g_k(T),1\right\} $) this also yields $\sum_{T\subseteq S} h_k(T) \geq 1$, which shows $S\in \left<h_k\right>$. 
   Therefore the following sequence of inclusions holds
    \begin{alignat}{100}\label{eq:cover}
        \left<g\right>\subseteq \bigcup_{k=1}^n\left<h_k\right>\subseteq \bigcup_{k=1}^n\left<G_k\right> = \left<\bigcup_{k=1}^n G_k\right>
    \end{alignat}
The claim now follows with  $G=\bigcup_{k=1}^n G_k$ and~\eqref{eq:weight},~\eqref{eq:cover}. 
\end{proof}

\section{Proof of Theorem~\ref{thm:randomized}}

We will construct a desired set $G\subseteq 2^X$ from $\supp(g)$ by taking each $S\in \lgr$ randomly with probability proportional to the weight $g(S)$ independently of the others. The following lemma estimates the probability that such random $G$ satisfies $\lgr\subseteq\lGr$.

\begin{lemma}
\label{LemmaProbCovering}
Let $n$, $k\in\NN$ with $k\le n$  and let $X$ be an $n$-element set. 
Let $c>0$ and let $g\colon \binom{X}{k}\to[0,1]$ be a function. 
The set $G\subseteq\supp(g)$ is obtained by including each $T\in\supp(g)$ into $G$ with probability $\PP(T\in G):=\min\left\{cg(T),1\right\}$ independently of the others. Then we have
    \begin{alignat*}{100}
        \mathbb{P}\left(\left<g\right>\subseteq \left<G\right>\right)\geq 1 - e^{n-c}
    \end{alignat*}
\end{lemma}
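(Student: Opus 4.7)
The plan is a standard union-bound argument: for any fixed $S \in \lgr$ I will upper-bound the probability that no $T \subseteq S$ lands in $G$, and then sum over all $S \in \lgr$ using $|\lgr| \le 2^X| \le 2^n < e^n$.

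First, fix some $S \in \lgr$, so that by definition $\sum_{T\subseteq S} g(T) \ge 1$. The event $S \notin \lGr$ is the event that \emph{none} of the sets $T \subseteq S$ in $\supp(g)$ is included in $G$. Since the inclusions are independent,
\begin{equation*}
\PP(S \notin \lGr) \;=\; \prod_{T \subseteq S,\, T \in \supp(g)} \bigl(1 - \min\{c\,g(T), 1\}\bigr).
\end{equation*}
If some $T \subseteq S$ has $c\,g(T) \ge 1$, the product is zero and there is nothing to show. Otherwise every factor is $1 - c\,g(T)$, and the elementary inequality $1 - x \le e^{-x}$ gives
\begin{equation*}
\PP(S \notin \lGr) \;\le\; \prod_{T \subseteq S} e^{-c\,g(T)} \;=\; \exp\Bigl(-c \sum_{T\subseteq S} g(T)\Bigr) \;\le\; e^{-c},
\end{equation*}
using $\sum_{T\subseteq S} g(T) \ge 1$ in the last step.

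Finally I apply a union bound over all $S \in \lgr$. Since $\lgr \subseteq 2^X$ and $|X|=n$, we have $|\lgr| \le 2^n \le e^n$ (as $2 < e$), so
\begin{equation*}
\PP\bigl(\lgr \not\subseteq \lGr\bigr) \;\le\; \sum_{S \in \lgr} \PP(S \notin \lGr) \;\le\; 2^n \cdot e^{-c} \;\le\; e^{n-c},
\end{equation*}
which gives the claimed lower bound on $\PP(\lgr \subseteq \lGr)$.

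There is no real obstacle here: the proof is a direct application of $1-x \le e^{-x}$ together with the defining inequality of $\lgr$ and a trivial union bound over at most $2^n$ sets. The only thing to be a little careful about is handling the truncation $\min\{c g(T),1\}$, which is dispensed with by observing that a factor of $0$ makes the bound vacuous.
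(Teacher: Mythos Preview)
Your proof is correct and essentially identical to the paper's own argument: both bound $\PP(S\notin\lGr)\le e^{-c}$ via $1-x\le e^{-x}$ and the defining inequality of $\lgr$, then take a union bound over $|\lgr|\le 2^n\le e^n$.
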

\begin{proof}
    Let $S\in\left<g\right>$. From Definition~\ref{def:cov_sets}  we know that $\sum_{T\subseteq S} g(T)\geq 1$ and we get (we assume $c\cdot g(T)<1$ for all $T\subseteq S$ otherwise this is trivial):
    \begin{alignat*}{100}
        \mathbb{P}\left(S\not\in \left<G\right>\right)= \prod_{T\subseteq S}\mathbb{P}\left(T\not\in G\right)= \prod_{T\subseteq S}\left(1-c\cdot g(T)\right)\leq \prod_{T\subseteq S}e^{-c\cdot g(T)}= e^{-c\cdot \sum_{T\subseteq S}g(T)}\leq e^{-c}
    \end{alignat*}
Taking union bound over all $S\in\left<g\right>$ and since $|\left<g\right>|\leq 2^n$ we obtain
    \begin{alignat*}{100}
        \mathbb{P}\left(\left<g\right>\subseteq \left<G\right>\right)= 1 - \mathbb{P}\left(\exists S\in\left<g\right>: S\not\in \left<G\right>\right) \geq 1 - 2^n\cdot e^{-c} \geq 1 - e^{n-c}.
    \end{alignat*}
\end{proof}

Next we estimate the expected weight of $G$ constructed in Lemma~\ref{LemmaProbCovering}.
\begin{lemma}
\label{LemmaExpectedWeight}
Let $g$ be a function and let $G$ be a random set as in assumption of Lemma~\ref{LemmaProbCovering}. 
If $w(g,p)=1$ we obtain for any $L\ge1$ that 
    \begin{alignat*}{100}
        \mathbb{E}\left[w\left(G,\frac{p}{L}\right)\right]\leq \frac{c}{L^k}.
    \end{alignat*}
\end{lemma}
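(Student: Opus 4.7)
The proof is essentially a direct calculation by linearity of expectation, exploiting two facts: the support of $g$ is $k$-uniform, and the inclusion probability $\min\{cg(T),1\}$ is bounded above by $cg(T)$.

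My plan is to begin by applying linearity of expectation to $w(G,p/L) = \sum_{T\in 2^X}\mathds{1}_{\{T\in G\}}(p/L)^{|T|}$. Since $G\subseteq\supp(g)\subseteq\binom{X}{k}$, only terms with $|T|=k$ contribute, and $\mathbb{P}(T\in G) = \min\{cg(T),1\}$ by construction. This gives
\begin{equation*}
\mathbb{E}\left[w\left(G,\frac{p}{L}\right)\right] = \sum_{T\in\supp(g)} \min\{cg(T),1\}\cdot\left(\frac{p}{L}\right)^{k}.
\end{equation*}

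Next I would use the trivial bound $\min\{cg(T),1\}\leq cg(T)$ and factor $1/L^{k}$ out of the sum. Since all sets in the support have cardinality exactly $k$, we can identify the remaining sum with $w(g,p)$:
\begin{equation*}
\mathbb{E}\left[w\left(G,\frac{p}{L}\right)\right] \leq \frac{c}{L^{k}}\sum_{T\in\supp(g)} g(T)\cdot p^{k} = \frac{c}{L^{k}}\cdot w(g,p) = \frac{c}{L^{k}},
\end{equation*}
using the assumption $w(g,p)=1$ in the last equality.

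There is no real obstacle here — the statement is essentially a bookkeeping consequence of $k$-uniformity of $\supp(g)$ together with the chosen inclusion probabilities. The only subtlety is to notice that the uniformity lets the factor $(p/L)^{|T|}$ be pulled out as the constant $(p/L)^{k}$, which is exactly what converts the ratio $w(g,p/L)/w(g,p)$ into the clean power $1/L^{k}$.
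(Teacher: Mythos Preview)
Your proof is correct and follows essentially the same approach as the paper: apply linearity of expectation, use $\mathbb{P}(T\in G)\le c\,g(T)$, and exploit the $k$-uniformity of $\supp(g)$ to pull out the factor $(p/L)^k$ and recognise the remaining sum as $w(g,p)=1$.
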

\begin{proof}\ \\
By the linearity of expectation we get
    \begin{alignat*}{100}
        \mathbb{E}\left[w\left(G,\frac{p}{L}\right)\right] = \sum_{S\in \binom{n}{k}} \mathbb{P}\left(S\in G\right)\cdot \left(\frac{p}{L}\right)^k \leq \frac{1}{L^k}\cdot\sum_{S\in \binom{n}{k}} c\cdot g(S)\cdot p^k = \frac{c}{L^k}\cdot w\left(g,p\right) = \frac{c}{L^k}
    \end{alignat*}
\end{proof}

\begin{proof}[Proof of Theorem~\ref{thm:randomized}]
We set $c=n+1$ with foresight. Lemma~\ref{LemmaProbCovering} yields:
 \[
 \mathbb{P}\left(\left<g\right>\subseteq \left<G\right>\right)\geq \frac{1}{2},
 \]
whereas Lemma \ref{LemmaExpectedWeight} (applied with $L=4n^{1/k}$) guarantees 
    \begin{alignat*}{100}
        \mathbb{E}\left[w\left(G,\frac{p}{4\cdot n^{\frac{1}{k}}}\right)\right]\leq \frac{n+1}{4^k\cdot n} \leq \frac{1}{2}.
    \end{alignat*}
Consequently we obtain with $w(\cdot,\cdot)\geq 0$ and $\mathbb{E}\left[X\right]=\mathbb{P}\left(A\right)\mathbb{E}\left[X\mid A\right]+\mathbb{P}\left(\lnot A\right)\mathbb{E}\left[X\mid \lnot A\right]$ for any event $A$ and random variable $X$:
    \begin{alignat*}{100}
        \mathbb{E}\left[\left. w\left(G,\frac{p}{4\cdot n^{\frac{1}{k}}}\right)\right|\left<g\right>\subseteq \left<G\right>\right]\leq 1.
    \end{alignat*}
  Hence there has to be a choice of $G$ that fullfils $\left<g\right>\subseteq \left<G\right>$ and $w\left(G,\frac{p}{4\cdot n^{\frac{1}{k}}}\right)\leq 1$, which completes the proof of the first statement of Theorem~\ref{thm:randomized}.
  
 The second assertion of Theorem~\ref{thm:randomized} follows by using $k\geq C \log_2(n)$ and the monotonicity of $w\left(G,p\right)$ in $p$:
    \begin{alignat*}{100}
w\left(G,\frac{p}{4\cdot 2^\frac{1}{C}}\right) = w\left(G,\frac{p}{4\cdot n^\frac{1}{C\log_2 n}}\right) \le        w\left(G,\frac{p}{4\cdot n^\frac{1}{k}}\right) \le     1.
    \end{alignat*}
\end{proof}

\section{Proof of Theorem~\ref{TheoremSimpleHypergraphs}}
The proof of Theorem~\ref{TheoremSimpleHypergraphs} follows along the lines of the proof of Frankston, Kahn and Park~\cite{TalagrandTwoSets} for functions supported by $2$-element sets. However, we write it up for the statement according to Conjecture~\ref{conj:Talagrand_b} and the details are quite different, require adaptations at several places and also lead to certain technical simplifications so that we see no other way than  to provide the proof in its entirety.

As already mentioned in the introduction, Talagrand~\cite[Section 6]{TalagrandOriginal} solved the special case of Conjecture~\ref{conj:Talagrand_b} when $g$ is supported by $1$-element subsets of $X$. We provide its short proof for the sake of completeness.
\begin{proposition}
\label{TheoremSingletons}
    For every $g: \binom{X}{1} \rightarrow [0,1]$ and $p\in [0,1]$ the following holds. If 
    $w(g,p) = 1$ then there exists a set $G\subseteq 2^X$ with $\left<g\right>\subseteq \left<G\right>$ and $w\left(G,\frac{p}{4e}\right) \leq 1$.    
\end{proposition}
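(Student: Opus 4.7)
The plan is to reduce the statement to a quantitative covering statement about the weights $w_x := g(\{x\})$ and then cover the up-set $\lgr$ by subsets of dyadic weight classes.

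First I would set $w_x := g(\{x\})$, so that the hypothesis $w(g,p)=1$ becomes $\sum_x w_x = 1/p$, and $\lgr$ is exactly $\{S\subseteq X : \sum_{x\in S} w_x \geq 1\}$. It is convenient to work with the normalised weights $v_x := p w_x \in [0,1]$ satisfying $\sum_x v_x = 1$; the covering condition becomes $\sum_{x\in S} v_x \geq p$.

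Next I would partition $X$ into dyadic weight classes
$X_r := \{x \in X : v_x \in (2^{-r-1}, 2^{-r}]\}$ for $r \geq 0$.
The crucial structural bound, which is independent of $p$ and of $|X|$, is
\[
|X_r|\cdot 2^{-r-1} < \sum_{x \in X_r} v_x \leq 1,
\]
hence $|X_r|\leq 2^{r+1}$. Moreover, using $v_x \leq 2^{-r}$ on $X_r$, every $S \in \lgr$ satisfies
$\sum_r |S\cap X_r|\cdot 2^{-r} \geq \sum_{x\in S} v_x \geq p$.

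The cover will be $G = \bigcup_r G_r$, with $G_r := \binom{X_r}{k_r}$ for carefully chosen integers $k_r$. The pigeonhole direction is straightforward: if $|S\cap X_r| < k_r$ for every $r$, then $\sum_r (k_r-1) 2^{-r} \geq \sum_r |S\cap X_r|\cdot 2^{-r} \geq p$; so, as long as we ensure the ``budget'' constraint $\sum_r (k_r-1)2^{-r} < p$, we automatically have $\lgr \subseteq \lGr$. For the weight, the layer bound gives
\[
\binom{|X_r|}{k_r}\Bigl(\tfrac{p}{4e}\Bigr)^{k_r} \leq \Bigl(\tfrac{e|X_r|}{k_r}\Bigr)^{k_r}\Bigl(\tfrac{p}{4e}\Bigr)^{k_r} \leq \Bigl(\tfrac{p\cdot 2^{r-1}}{k_r}\Bigr)^{k_r},
\]
so the factor $4e$ is precisely what is needed to cancel the constants from $\binom{\cdot}{k_r}$ and the bound $|X_r|\leq 2^{r+1}$, forcing the per-layer base to be $\leq 1$ as soon as $k_r \gtrsim p\cdot 2^{r-1}$.

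The main obstacle is then to choose $(k_r)$ so that both conditions hold simultaneously across potentially unboundedly many non-empty layers. The key observation that makes this possible is that $v_x \leq p$ forces $X_r = \emptyset$ for $r < \lceil\log_2(1/p)\rceil$, which bounds the ``small-$r$'' range where the pigeonhole budget is expensive. Concretely, one sets $k_r := \lceil p\cdot 2^{r-1}\rceil$ on the active layers, so that the base $p\cdot 2^{r-1}/k_r$ of each term is $\leq 1$ and decays geometrically in $r$ on the active range, while the correction in the ceiling is absorbed by the strict inequality in the budget constraint. Putting the geometric sum together, $\sum_r (p\cdot 2^{r-1}/k_r)^{k_r} \leq 1$, completing the proof. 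This is the step where all the constants are tight, and it is where the factor $4e$ (rather than any smaller constant) enters irreducibly.
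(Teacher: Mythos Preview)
Your dyadic approach has a genuine gap on the covering side. With $k_r:=\lceil p\,2^{r-1}\rceil$ the ``budget'' inequality $\sum_r (k_r-1)2^{-r}<p$ simply fails in general, because each summand is of order $p/2$ and the number of non-empty layers is not bounded by a universal constant. Concretely, take $p=1/8$, so $v_x:=p\,g(\{x\})\le 1/8$ and $k_r=\lceil 2^{r-4}\rceil$. Put one element of weight $v=2^{-5}$ in $X_5$, three elements of weight $2^{-6}$ in $X_6$, seven elements of weight $2^{-7}$ in $X_7$, and finally seven elements of weight $111/896\in(1/16,1/8]$ in $X_3$ to make $\sum_x v_x=1$. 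Then $k_5=2$, $k_6=4$, $k_7=8$, so $G_5=G_6=G_7=\emptyset$ and $G=G_3=\binom{X_3}{1}$. The set $S=X_5\cup X_6\cup X_7$ satisfies $S\cap X_3=\emptyset$, hence $S\notin\lGr$, yet $\sum_{x\in S} v_x=1/32+3/64+7/128=17/128>1/8=p$, so $S\in\lgr$. Thus $\lgr\not\subseteq\lGr$. The sentence ``the correction in the ceiling is absorbed by the strict inequality in the budget constraint'' is where the argument breaks: you need the \emph{sum} over layers to be $<p$, not each term individually.

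Your weight estimate is also not correct as written: with $k_r=\lceil p\,2^{r-1}\rceil$ the base $p\,2^{r-1}/k_r$ equals $1$ whenever $p\,2^{r-1}\in\NN$, so $\sum_r(p\,2^{r-1}/k_r)^{k_r}$ need not be bounded. (This particular issue is repairable: using the sharper bound $|X_r|\le m_r$ together with $\sum_r m_r2^{-r-1}<1$ one gets $\sum_r\binom{m_r}{k_r}(p/4e)^{k_r}\le\sum_r(m_r2^{-r-1})^{k_r}\le\sum_r m_r2^{-r-1}<1$.) But the covering failure is structural: a set $S$ can spread its mass thinly across many dyadic layers, contributing fewer than $k_r$ points to each, and no per-layer cover of the form $\binom{X_r}{k_r}$ can catch it.

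The paper's proof avoids this by working with the \emph{sorted} sequence of weights rather than dyadic layers. Setting $a:=\lceil 1/p\rceil$ and taking $G$ to consist of all $j$-subsets of the $ja$ heaviest singletons for every $j\ge1$, one gets a cover that cuts \emph{across} weight scales: any $S\in\lgr$ must hit the top $ja$ elements at least $j$ times for some $j$, by a short rearrangement argument. The weight then telescopes as $\sum_j\binom{ja}{j}(p/4e)^j\le\sum_j 2^{-j}\le1$. The key difference is that the paper's cover is indexed by the \emph{number} of heavy elements in $S$ rather than by which dyadic bucket they fall in, and this is precisely what handles sets that are spread across many scales.
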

\begin{proof}
    Let $\left(T_i\right)_{i=1}^n$ be the elements of $\binom{X}{1}$ such that $g\left(T_i\right)\geq g\left(T_{i+1}\right)$ for all $i\in [n-1]$. 
    We define $a:=\left\lceil\sum_{i=1}^n g\left(T_i\right)\right\rceil$ and observe that $a\leq 2\cdot \sum_{i=1}^n g\left(T_i\right) =2\cdot p^{-1}$
since  $\sum_{i=1}^n g\left(T_i\right)=p^{-1}\ge 1$. We then define $G$ as follows
    \begin{alignat*}{100}
        G := \bigcup_{j=1}^n \bigcup_{I\in\binom{\left[\min\{j\cdot a, n\}\right]}{j}} \left\{\bigcup_{i\in I} T_i\right\}.
    \end{alignat*}
    
    Since  $(g\left(T_i\right))_{i\in[n]}$ is a monotone decreasing sequence, we have that
    \begin{alignat}{100}\label{eq:one-a-bound}
        a\geq \sum_{i=1}^n g\left(T_i\right)> a\cdot \sum_{j: j \cdot a < n} g\left(T_{j \cdot a + 1}\right).
    \end{alignat}
    
Assume now there is a set $S\in \left<g\right>$ such that $S\not\in \left<G\right>$. Let $I\subseteq [n]$ be such that $S=\bigcup_{i\in I} T_i$. 
Then we know that $\sum_{i\in I} g\left(T_i\right)\geq 1$ and $\left| I\cap \left[j\cdot a\right] \right|< j$ for all $j\in [n]$. From this it follows directly that
    \begin{alignat*}{100}
        a\cdot \sum_{j: j \cdot a < n} g\left(T_{j \cdot a + 1}\right) \geq a\cdot \sum_{i\in I} g\left(T_i\right) \geq a.
    \end{alignat*}
    Which is a contradiction to~\eqref{eq:one-a-bound} and therefore  $\left<g\right>\subseteq \left<G\right>$ holds. Finally we compute the weight $w\left(G,\frac{p}{4e}\right)$
     as follows
    \begin{alignat*}{100}
        w\left(G,\frac{p}{4e}\right) \leq \sum_{j=1}^n \binom{j\cdot a}{j} \left(\frac{p}{4e}\right)^j \leq \sum_{j=1}^n \left(\frac{e\cdot j\cdot a}{j}\right)^j \left(\frac{p}{4e}\right)^j\overset{a\leq 2\cdot p^{-1}}{\leq} \sum_{j=1}^n \left(\frac{e\cdot 2\cdot p^{-1}\cdot p}{4e}\right)^j = \sum_{j=1}^n \frac{1}{2^j} \leq 1
    \end{alignat*}
\end{proof}

Recall from the introduction that from  $\supp(g)\subseteq \binom{X}{k}$ and $w(g,p)=1$ we can directly compute $p$ as $p = \left(\sum_{T\in \binom{X}{k}} g\left(T\right)\right)^{-\frac{1}{k}}$. The following definition introduces the set $\lgrJL\subseteq 2^X$ which requires `higher weight' for a set $S\in\lgr$ to be included into $\lgrJL$. 

\begin{definition}\label{def:lgrJL}
For $J$, $L\ge 1$, $k\in\NN$, a finite set $X$ and a function $g\colon \binom{X}{k}\to[0,\infty)$ define 
\begin{alignat*}{100}
    \left<g\right>_{J,L} := \left\{S\in 2^X\left| \sum_{T\in \binom{S}{k}} g(T)\geq \max\left\{J, \frac{L}{4ek}\cdot \left(\sum_{T\in \binom{X}{k}} g(T)\right)^{-1+\frac{1}{k}}\cdot \sum_{x\in S}\sum_{T: x\in T} g(T)\right\}\right.\right\}.
\end{alignat*}
\end{definition}
We remark that $\left(\sum_{T\in \binom{X}{k}} g(T)\right)^{-1+\frac{1}{k}}\cdot \sum_{x\in S}\sum_{T: x\in T} g(T)$ is equal to $p^{k-1} \sum_{x\in S}\sum_{T: x\in T} g(T)$ which can be thought of as the sum of weighted vertex degrees from $S$ in the weighted $k$-uniform hypergraph $g$ and that for $k=2$ this is the weight considered in~\cite{TalagrandTwoSets}.

The following theorem below is a generalization of Theorem~2.2 from~\cite{TalagrandTwoSets} to linear hypergraphs with constant weight (i.e.\ $\supp(g)$ is linear when viewed as a $k$-uniform hypergraph and $g$ is interpreted as its weight). The main difference  in the proof is that we exploit the observation that a $k$-uniform star with $m$ edges has exactly $\left(k-1\right)\cdot m +1$ vertices and we work directly with the function $g$.

\begin{theorem}
\label{TheoremLinearHypergraphsUnweighted}

    For all $n$, $k\in\NN$ with $n\ge k$, $J\ge 1$, $r>0$, $L\geq 2^{10}\cdot e^2$, any $n$-element set $X$ and for any function  $g: \binom{X}{k} \rightarrow [0,\infty)$ which equals $\frac{1}{r}$ on its support and with  $\Delta_2(\supp(g))\le 1$ the following holds. If $w(g,p)=1$ then there exists a set $G\subseteq 2^X$ with $\lgrJL \subseteq \lGr$ and $w\left(G,\frac{p}{L}\right) \leq  \left(\frac{1}{L}\right)^{\frac{\sqrt{J\cdot r}}{2^7}}$.
\end{theorem}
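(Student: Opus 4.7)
Since $g$ equals $\tfrac{1}{r}$ on $H := \supp(g)$, the hypothesis $w(g,p)=1$ translates to $|H| = r/p^k$, while $S\in \lgrJL$ means $|E(H[S])| \ge Jr$ together with $|E(H[S])| \ge \tfrac{L p^{k-1}}{4ek}\sum_{x\in S}\deg_H(x)$. The plan is to build $G$ from vertex sets of small subhypergraphs of $H$---specifically stars and matchings---so that every $S\in\lgrJL$ already contains some $T\in G$ as a subset, and then control $w(G,p/L)$ by two geometric series.

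Set $m^\star := \lceil \sqrt{Jr}/2^{6}\rceil$, and let $N := |E(H[S])| \ge Jr$ for $S\in\lgrJL$. The key structural tool is the star/matching dichotomy, which hinges on the identity $|V(\text{star})|=(k-1)m+1$ for a $k$-uniform star on $m$ edges (which uses linearity of $H$). Either some $x\in S$ satisfies $\deg_{H[S]}(x)\ge m^\star$, in which case any star of $m^\star$ edges of $H$ through $x$ yields a witness $T\subseteq S$ with $|T|=(k-1)m^\star+1$; or every codegree in $H[S]$ is strictly less than $m^\star$, in which case by linearity each edge of a greedy matching rules out at most $k(m^\star-1)$ other edges, so a matching of size at least $N/(km^\star)\ge 2^{10}\,m^\star/k$ exists in $H[S]$, giving a witness $T\subseteq S$ of size at least $2^{10}\,m^\star$.

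Accordingly, define $G$ to be the union of all $((k-1)m+1)$-vertex sets of $m$-edge stars in $H$ for every $m\ge m^\star$, together with all $(km)$-vertex sets of $m$-edge matchings in $H$ for every $m\ge 2^{10}m^\star/k$. The dichotomy above immediately gives $\lgrJL\subseteq\lGr$. The matching part is estimated by $\binom{|H|}{m}\le (e|H|/m)^m$:
\begin{alignat*}{100}
\sum_{m\ge 2^{10}m^\star/k}\binom{|H|}{m}\left(\tfrac{p}{L}\right)^{km}\le \sum_{m\ge 2^{10}m^\star/k}\left(\tfrac{er}{mL^k}\right)^m,
\end{alignat*}
which, since $L\ge 2^{10}e^2$ and $er/m < L^k$ in the range of summation, is dominated by its first term and is $\le L^{-\Omega(m^\star)}$. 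The star part takes the analogous form
\begin{alignat*}{100}
\sum_{x\in X}\sum_{m\ge m^\star}\binom{\deg_H(x)}{m}\left(\tfrac{p}{L}\right)^{(k-1)m+1},
\end{alignat*}
which one handles by $\binom{d}{m}\le (ed/m)^m$ together with $\sum_x\deg_H(x) = k|H|=kr/p^k$.

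The main obstacle is the star term: unlike matchings, whose total count is bounded by $\binom{|H|}{m}$ and therefore depends only on $|H|$, the count $\sum_x\binom{\deg_H(x)}{m}$ is sensitive to the degree sequence of $H$ and can blow up if $H$ has a few very lopsided vertices. The intended remedy is to bring in the second condition in the definition of $\lgrJL$, namely $\sum_{x\in S}\deg_H(x) \le \tfrac{4ek}{L}\,p^{-(k-1)}|E(H[S])|$: this bound lets one treat stars centred at very high-degree vertices separately (absorbing them into a matching-style estimate on the edges they carry) and reduces the remaining sum over moderate-degree centres to a geometric series that again collapses to $L^{-\Omega(m^\star)}$. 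Combining both estimates yields $w(G,p/L)\le (1/L)^{\sqrt{Jr}/2^{7}}$, as required.
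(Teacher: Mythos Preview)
Your star/matching dichotomy is the right intuition, but the single-scale version you propose does not close. The matching estimate already fails: you assert that $er/m < L^k$ holds for $m\ge 2^{10}m^\star/k$, but with $m^\star=\Theta(\sqrt{Jr})$ this gives $er/m = \Theta\!\left(k\sqrt{r/J}\right)$, which exceeds $L^k$ as soon as $r/J\gg L^{2k}/k^2$. Nothing in the hypotheses bounds $r$ (take for instance $k=2$, $J=1$, and $H$ a perfect matching on $n$ vertices with $p=4e/L$: then $r=\Theta(n)$ and your bound $\bigl(er/(mL^k)\bigr)^m$ blows up). The same phenomenon hits the star term: for a single high-degree centre $x$ one gets $\binom{\deg_H(x)}{m^\star}(p/L)^{(k-1)m^\star+1}\approx\bigl(\Theta(\sqrt{\deg_H(x)})/L\bigr)^{m^\star}$, which is $>1$ once $\deg_H(x)\gg L^2$. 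Your proposed ``remedy'' via the second condition in $\lgrJL$ is a constraint on $\sum_{x\in S}\deg_H(x)$ for each individual $S$, whereas the weight $w(G,p/L)$ is a sum over \emph{all} stars in $H$; the inequality does not transfer.

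The paper resolves both issues simultaneously, and the mechanism is not a patch of your argument but a genuinely different construction. Instead of one star size $m^\star$ and one matching size $m_0$, it takes $\ell\approx\tfrac12\log_2\!\tfrac{Jr}{k}$ scales and at each scale $i$ uses collections of $b_i$ vertex-disjoint stars with $L_i=2^{i-1}$ edges each, where $b_i L_i^2\approx Jr/(k-1)$ (so $b_1\approx Jr/(k-1)$ is a \emph{linear}-in-$r$ matching, not $\sqrt{r}$). Crucially, only stars centred at vertices with $\deg_H(x)\le 8ek L_i/(Lp^{k-1})$ are admitted into $G$. This degree cap is what makes the weight of the star count $\binom{\deg_H(x)}{L_i}$ controllable, and the second condition in $\lgrJL$ is used in the \emph{covering} argument (not the weight estimate) to show that the greedy star-extraction can always respect the cap. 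The multi-scale cascade is then needed because at no single scale can one guarantee both covering and small weight; a weighted pigeonhole over the scales $i$ finds one that works for each $S$. Your approach is missing both the degree cap and the multi-scale interpolation, and neither can be omitted.
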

\begin{proof}
We may assume w.l.o.g.\ that $k\ge 2$ as otherwise $\lgrJL=\emptyset$ and the assertion is trivial.

 We may also assume w.l.o.g.\  that  $J\cdot r>2^5\cdot k$ since otherwise we would have $\frac{2^5 k}{r}\ge J\ge 1$ and we could just take $G=\supp (g)$ because then $(2^5 k) \cdot g$ attains value at least one on its support and also $w\left((2^5 k) \cdot g,\frac{p}{L}\right)=\frac{2^5  k}{L^k}\cdot w(g,p) \leq \left(\frac{1}{L}\right)^{\frac{k}{2}}= \left(\frac{1}{L}\right)^{\frac{2^5 k}{2^6}} \leq \left(\frac{1}{L}\right)^{\frac{\sqrt{J r}}{2^6}}$. 
    Therefore we can fix an $\ell\in\NN$ such that 
\begin{equation}\label{eq:ell}
    2^{2\cdot \ell+3} < \frac{J\cdot r}{k-1} \leq 2^{2\cdot (\ell+1)+3}= 2^{2\cdot \ell+5}.
\end{equation}    
  For every $i\in [\ell]$,  let 
\begin{equation}\label{eq:L_b_i}
  L_i:= 2^{i-1} \quad \text{ and }\quad b_i:=2^{2\cdot (\ell-i)-\min\left\{i-1,\ell-i\right\}}. 
\end{equation}
  For every $x\in X$ define $N(x):=\left\{T\in\supp(g)|x\in T\right\}$ to be the \emph{neighborhood of $x$} and $\deg(x):=\left|N(x)\right|=r\cdot \sum_{T: x\in T} g(T)$ its \emph{degree} (by multiplying with $r$, $\deg(x)$ is the number of edges of $\supp(g)$ incident to $x$). Observe that 
  \begin{equation}\label{eq:degree_sum}
   \sum_{x\in X}\deg(x)=k \cdot \sum_{T\in\supp(g)} rg(T)=k r p^{-k},
  \end{equation}
  since, by assumption, $w(g,p)=1$.  
  A collection $U$ of edges from the $k$-uniform hypergraph $\supp(g)$ will be called a \emph{$k$-uniform star with center $x$} (or shortly: \emph{a star}), if all sets from $U$ intersect pairwise exactly in $x$.

    Next we define the following set of vertex-disjoint stars $G(b_i,L_i)$ in the hypergraph $\supp(g)$: \newpage
\begin{multline}\label{eq:GbiLi}
        G(b_i,L_i) := \left\{\dot\bigcup_{j=1}^{b_i}\bigcup_{E\in U_j} E \,\bigg|\, 
        \forall j\in [b_i]\,\exists  x_j\in X \text{ and } U_j\subseteq N(x_j) \text{ such that } \right.\\ \left.
       \left|U_j\right|=L_i\geq \frac{L}{8ek}\cdot \deg(x_j)\cdot \left(\sum_{T\in \binom{X}{k}} g(T)\right)^{-1+\frac{1}{k}} 
 \text{ and } \forall j\neq j': \left(\bigcup_{E\in U_j} E\right)\bigcap\left(\bigcup_{E\in U_{j'}} E\right)=\emptyset \right\}.
    \end{multline}    
Each set from $G\left(b_i,L_i\right)$ consists of exactly $b_i$ vertex-disjoint stars,  each star with $L_i$ edges, where it is also additionally required that $L_i\geq \frac{L}{8ek}\cdot \deg(x_j)\cdot \left(\sum_{T\in \binom{X}{k}} g(T)\right)^{-1+\frac{1}{k}}$ holds. The set $G\subseteq 2^X$ consists then of all possible sets from $G\left(b_i,L_i\right)$ for some $i\in[\ell]$:
\begin{equation*}
G := \bigcup_{i=1}^\ell G\left(b_i,L_i\right)
\end{equation*}
    
 In the following we will verify that $G$ covers $\lgrJL$ and that the weight of $G$ is as claimed, i.e.\ at most $\left(\frac{1}{L}\right)^{\frac{\sqrt{J\cdot r}}{2^7}}$.
\begin{claim}\label{claim:covering}
$
\lgrJL\subseteq \lGr.
$
\end{claim}
\begin{proof}[Proof of Claim~\ref{claim:covering}]
Let $S\in \left<g\right>_{J,L}$. Then we know from the definition of $\left<g\right>_{J,L}$ that
    \begin{alignat*}{100}
        \sum_{T\in\binom{ S}{k}} g(T)\geq \max\left\{J,  \frac{L}{4ek}\cdot \left(\sum_{T\in \binom{X}{k}} g(T)\right)^{-1+\frac{1}{k}}\cdot \sum_{x\in S}\sum_{T: x\in T} g(T)\right\}.
    \end{alignat*}
We will construct pairs $\left(x_j,U_j\right)$ for $j\in[n]$, where $x_j\in S$ and $U_j$ is a star with center $x_j$ in the induced hypergraph $\supp(g)[S]$. We will proceed inductively repeating the following steps (starting with $j=1$ and increasing the value of $j$ until no star can be found anymore):
    \begin{enumerate}
        \item Choose $x_j\in S\setminus \left(\bigcup_{j'<j} U_{j'}\right)$ such that the size of $U_j:=N\left(x_j\right)\cap\binom{S\setminus \left(\bigcup_{j'<j} U_{j'}\right)}{k}$ is maximised under the condition that it has size at least $\frac{L}{8 e k}\cdot \deg(x_j)\cdot \left(\sum_{T\in \binom{X}{k}} g(T)\right)^{-1+\frac{1}{k}}$.
        \item If no such $x_j$ exists we set $U_{j'}=\emptyset$ for all $j'\geq j$ and stop.
    \end{enumerate}
This procedure asserts that we end up with a collection of pairwise disjoint sets in $S$ (corresponding to vertex-disjoint stars in $\supp(g)[S]$). Next we argue that there will always be a set from  $G$ (defined in~\eqref{eq:GbiLi} for some $i\in[\ell]$)  contained in  some union of $U_j$'s which we found, and hence contained in the set $S$. 

We set $d_j:=\left|U_j\right|$ (for all $j\in[n]$) and observe that for every $E\in \supp(g)[S]$ at least one of the following is true:
    \begin{enumerate}[(i)]
        \item $E$ is contained in some $U_j$.
        \item $E$ was removed because it intersects an $E'\in U_j$ for some $j$.
        \item Some vertex $x$ from $E$ has degree less than $\frac{L}{8 e k}\cdot \deg(x)\cdot \left(\sum_{T\in \binom{X}{k}} g(T)\right)^{-1+\frac{1}{k}}$ in the induced hypergraph $\supp(g)\left[S\setminus \left(\bigcup_{j} U_{j}\right)\right]$.
    \end{enumerate}
    For the possibility (iii)  above we account at most 
\[
\sum_{x\in S} \frac{L}{8 e k}\cdot \deg(x)\cdot \left(\sum_{T\in \binom{X}{k}} g(T)\right)^{-1+\frac{1}{k}}
\]
 edges (by adding up all the degrees of vertices that can not be chosen as a center). 
 We account for the possibilities (i) and (ii), whereby excluding those edges already covered by (iii), at most $\sum_{j=1}^n \left(k-1\right)\cdot d_j^2$ edges. This is due to the fact, that a star with $d_j$ edges contains exactly $(k-1)d_j+1$ vertices (recall that $\supp(g)$ is a linear hypergraph) and the current maximum degree of an eligible vertex at step $j$ is $d_j$, which results in  removing at most $(k-1)d_j^2$ edges from $\supp(g)[S]$ plus those edges already taken care of by (iii). 
 
 The sum of these both estimates is an upper bound on the total number of edges in $\supp(g)[S]$: 
    \begin{alignat*}{100}
         \sum_{j=1}^n \left(k-1\right)\cdot d_j^2 + \sum_{x\in S} \frac{L}{8 e k}\cdot \deg(x)\cdot \left(\sum_{T\in \binom{X}{k}} g(T)\right)^{-1+\frac{1}{k}} \geq \sum_{T\in \binom{S}{k}} \mathds{1}_{\left\{T\in\supp(g)\right\}}= r\cdot\sum_{T\in \binom{S}{k}} g(T).
    \end{alignat*}
    
    With $\deg(x)=r\cdot \sum_{T: x\in T} g(T)$ and $S\in \left<g\right>_{J,L}$ (cf.\ Definition~\ref{def:lgrJL}) we know:
    \begin{alignat*}{100}
        \sum_{x\in S} \frac{L}{8 e k}\cdot \deg(x)\cdot \left(\sum_{T\in \binom{X}{k}} g(T)\right)^{-1+\frac{1}{k}} = \frac{r}{2}\cdot  \frac{L}{4ek}\cdot \sum_{x\in S}\sum_{T: x\in T} g(T)\cdot \left(\sum_{T\in \binom{X}{k}} g(T)\right)^{-1+\frac{1}{k}} \leq \frac{r}{2}\cdot \sum_{T\in \binom{S}{k}} g(T).
    \end{alignat*}
    
The above inequalities together imply:
    \begin{alignat*}{100}
        \sum_{j=1}^n \left(k-1\right)\cdot d_j^2 \geq \frac{r}{2}\cdot\sum_{T\in \binom{S}{k}} g(T) \overset{\text{Definition~\ref{def:lgrJL}}}{\geq} \frac{r\cdot J}{2} \geq 2^{2\cdot \ell+2}\cdot \left(k-1\right),
    \end{alignat*}
 where the last inequality used the choice of $\ell$ (cf.~\eqref{eq:ell}).   
    
    Now, for every $i\in[\ell]$ we define $B_i:=\left\{d_j\left|L_i\leq d_j< L_{i+1}\right.\right\}$, where we set $L_{\ell+1}:=\infty$. 
    We aim to find an $i\in[\ell]$ such that $\left|B_i\right|\geq b_i$. 
 If $\left|B_\ell\right|\geq 1=b_\ell$ then we are done. Otherwise we have
    \begin{alignat*}{100}
\sum_{i=1}^{\ell-1} \left(k-1\right)\cdot \left|B_i\right|\cdot L_{i+1}^2 \geq \sum_{j=1}^n \left(k-1\right)\cdot d_j^2\geq \left(k-1\right)\cdot 2^{2\cdot \ell+2},
    \end{alignat*}
which simplifies to (recalling the definition of $L_i=2^{i-1}$, cf.~\eqref{eq:L_b_i})
    \begin{alignat*}{100}
 \sum_{i=1}^{\ell-1} \left|B_i\right|\cdot 2^{2\cdot (i-\ell) -2} \geq 1.
    \end{alignat*}  
   
We rewrite the sum by relating it to the $b_i$s (cf.~\eqref{eq:L_b_i}):
    \begin{alignat*}{100}
        1\le \sum_{i=1}^{\ell-1} \left|B_i\right|\cdot 2^{2\cdot (i-\ell) -2} =  \sum_{i=1}^{\ell-1} \frac{\left|B_i\right|}{2^{2\cdot (\ell-i) -\min\left\{i-1,\ell-i\right\}}}\cdot \frac{2^{-\min\left\{i-1,\ell-i\right\}}}{4} \leq \frac{1}{2}\cdot \sum_{i=1}^{\ell-1} \frac{\left|B_i\right|}{b_i}\cdot \left(2^{-i}+2^{-\ell+i-1}\right),
    \end{alignat*}
    from which the existence of an $i\in[\ell-1]$ with $\left|B_i\right|\geq b_i$ follows by the geometric sum.
\end{proof}

Finally we turn to calculating the weight of $G$.
\begin{claim}\label{claim:weight}
$w\left(G,\frac{p}{L}\right) \leq \left(\frac{1}{L}\right)^{\frac{\sqrt{J\cdot r}}{2^7}}$.
\end{claim}
\begin{proof}[Proof of Claim~\ref{claim:weight}]
We will use that $w(g,p) = 1$ is equivalent to $p=\left(\sum_{T\in \binom{X}{k}} g(T)\right)^{-\frac{1}{k}}$. Hence the lower bound $\frac{L}{8ek}\cdot \deg(x_j)\cdot \left(\sum_{T\in \binom{X}{k}} g(T)\right)^{-1+\frac{1}{k}}$ on $L_i$ in the definition of $G\left(b_i,L_i\right)$ (cf.~\eqref{eq:GbiLi}) translates to $L_i\ge \frac{L}{8ek}\cdot \deg(x_j)\cdot \left(p^{-k}\right)^{-1+\frac{1}{k}}=\frac{L}{8ek}\cdot \deg(x_j)\cdot p^{k-1}$. Moreover, by construction of $G\left(b_i,L_i\right)$, every set $S\in G\left(b_i,L_i\right)$ has cardinality exactly $b_i\cdot\left(\left(k-1\right)\cdot L_i +1\right)$ (there are $b_i$ vertex disjoint $k$-uniform stars, each with $L_i$ edges). Therefore, we estimate:
    \begin{alignat*}{100}
        w\left(G,\frac{p}{L}\right) & \leq \sum_{i=1}^\ell \left|G\left(b_i,L_i\right)\right|\cdot \left(\frac{p}{L}\right)^{b_i\cdot\left(\left(k-1\right)\cdot L_i +1\right)}\\
         & \leq \sum_{i=1}^\ell \sum_{Z\in\binom{X}{b_i}} \left(\prod_{z\in Z} \binom{\deg(z)}{L_i}\cdot\one_{\left\{L_i\geq \frac{L}{8ek}\cdot \deg(z)\cdot p^{k-1}\right\}}\right)\cdot \left(\frac{p}{L}\right)^{b_i\cdot\left(\left(k-1\right)\cdot L_i +1\right)}.
    \end{alignat*}
We simplify each most inner term as follows
\begin{alignat*}{100}
\binom{\deg(z)}{L_i}\cdot\mathds{1}_{\left\{L_i\geq \frac{L}{8ek}\cdot \deg(z)\cdot p^{k-1}\right\}}\le 
\left(\frac{e \deg(z)}{L_i}\right)^{L_i} \one_{\left\{L_i\geq \frac{L}{8ek}\cdot \deg(z)\cdot p^{k-1}\right\}}\le \\
\\
\left(\frac{e \deg(z)}{L_i}\right) \cdot \left(\frac{e \deg(z)} { \frac{L} {8ek}\cdot \deg(z)\cdot p^{k-1}}\right)^{L_i-1}=
\left(\frac{e \deg(z)}{L_i}\right) \cdot \left(\frac{8 e^2 k}{ L p^{k-1}}\right)^{L_i-1}.
\end{alignat*}
We obtain:
\begin{alignat*}{100}
        w\left(G,\frac{p}{L}\right)\le \sum_{i=1}^\ell \sum_{Z\in\binom{X}{b_i}} \left(\prod_{z\in Z} \left(\frac{e \deg(z)}{L_i}\right) \cdot \left(\frac{8 e^2 k}{ L p^{k-1}}\right)^{L_i-1}\right)\cdot \left(\frac{p}{L}\right)^{b_i\cdot\left(\left(k-1\right)\cdot L_i +1\right)}\\
        = \sum_{i=1}^\ell \left(
         \sum_{Z\in\binom{X}{b_i}} \left(\prod_{z\in Z} \deg(z)\right)
         \right) \cdot \left(\frac{e}{L_i} \right)^{b_i}\cdot \left(\frac{8 e^2 k}{ L p^{k-1}}\right)^{b_i(L_i-1)}\cdot \left(\frac{p}{L}\right)^{b_i\cdot\left(\left(k-1\right)\cdot L_i +1\right)}.
\end{alignat*} 
Now observe that the right hand side above is maximised when all $\deg(z)$ are the same for all $z$. 
    In this case we have  for all $z\in X$ that $\deg(z)=\frac{r\cdot p^{-k}\cdot k}{n}$ (cf.\eqref{eq:degree_sum}). And so we get:
     \begin{alignat}{100}
        w\left(G,\frac{p}{L}\right) &\le &&  
       \sum_{i=1}^\ell \binom{n}{b_i} \left(\frac{r p^{-k} k}{n}\right)^{b_i} 
 \cdot \left(\frac{e}{L_i} \right)^{b_i}\cdot \left(\frac{8 e^2 k}{ L p^{k-1}}\right)^{b_i(L_i-1)}\cdot \left(\frac{p}{L}\right)^{b_i\cdot\left(\left(k-1\right)\cdot L_i +1\right)} \notag\\
  &\le && \sum_{i=1}^\ell \left(\frac{en}{b_i}\right)^{b_i} \left(\frac{r p^{-k} k}{n}\right)^{b_i}
 \cdot \left(\frac{e}{L_i} \right)^{b_i}\cdot \left(\frac{8 e^2 k}{ L p^{k-1}}\right)^{b_i(L_i-1)}\cdot \left(\frac{p}{L}\right)^{b_i k}\cdot \left(\frac{p}{L}\right)^{\left(k-1\right) b_i  \left(L_i-1\right)} \notag\\
 &= && \sum_{i=1}^\ell \left(\frac{en\cdot r p^{-k} k\cdot e\cdot p^k}{b_i\cdot n\cdot L_i\cdot L^k}\right)^{b_i} 
 \cdot \left(\frac{8 e^2 k \cdot p^{k-1}}{ L p^{k-1}\cdot L^{k-1}}\right)^{b_i(L_i-1)} \notag\\
         & = && \sum_{i=1}^\ell \left(\frac{e^2 r k}{b_i  L_i  L^k}\right)^{b_i}\cdot \left(\frac{8 e^2  k}{L^k}\right)^{b_i(L_i-1)}\notag\\
         & \overset{\mathclap{\eqref{eq:ell},\eqref{eq:L_b_i}}}{\leq} && \quad \sum_{i=1}^\ell \left(\frac{e^2 2^{2\cdot \ell+5} k^2}{2^{2 (\ell-i)-\min\left\{i-1,\ell-i\right\}}\cdot 2^{i-1}\cdot L^k\cdot J}\right)^{b_i}\cdot \left(\frac{8 e^2 k}{L^k}\right)^{b_i(L_i-1)}\notag\\
         & = && \sum_{i=1}^\ell \left(\frac{e^2  k^2}{L^k J}\cdot 2^{i+6+\min\left\{i-1,\ell-i\right\}}\right)^{b_i}\cdot \left(\frac{8 e^2 k}{L^k}\right)^{b_i(L_i-1)} 
         \leq \sum_{i=1}^\ell \left(\frac{e^2 2^7 k^2}{L^k J}\cdot 2^{2i-2}\right)^{b_i}\cdot \left(\frac{2^5 e^2 k}{4L^k}\right)^{b_i(L_i-1)} 
         \notag\\
         &\overset{\mathclap{\eqref{eq:L_b_i}}}{=} \ &&     \sum_{i=1}^\ell \left(\frac{e^2 2^7 k^2}{L^k J}\cdot L_i^2\cdot 4^{-L_i+1}\right)^{b_i}\cdot \left(\frac{2^5 e^2 k}{L^k}\right)^{b_i(L_i-1)} 
 \overset{\eqref{eq:L_b_i}}{\leq} \sum_{i=1}^\ell \left(\frac{e^2 2^7 k^2}{L^k  J}\right)^{b_i}\cdot \left(\frac{2^5  e^2  k}{L^k}\right)^{b_i(L_i-1)}.\label{eq:GpL}
    \end{alignat}
By assumption of the theorem, $L\geq 2^{10}\cdot e^2$ and since $k\ge2$ we have $\frac{e^2 2^7 k^2 }{L^{k/2} J}\le \frac{1}{2}$ and $2^5e^2k\le L^{k/2}$. 
Hence, we simplify the last term of~\eqref{eq:GpL} as follows:
\begin{alignat*}{100}
    w\left(G,\frac{p}{L}\right) & \leq && \sum_{i=1}^\ell \left(\frac{1}{2L^{\frac{k}{2}}}\right)^{b_i}\cdot \left(\frac{1}{L^{\frac{k}{2}}}\right)^{b_i\left(L_i-1\right)} = \sum_{i=1}^\ell \left(\frac{1}{2}\right)^{b_i}\cdot \left(\frac{1}{L^{\frac{k}{2}}}\right)^{b_i\cdot L_i}\\
    & \overset{\mathclap{\eqref{eq:L_b_i}}}{=} \ && \sum_{i=1}^\ell \left(\frac{1}{2}\right)^{2^{2\cdot (\ell-i)-\min\left\{i-1,\ell-i\right\}}}\cdot \left(\frac{1}{L^{\frac{k}{2}}}\right)^{2^{2\cdot (\ell-i)-\min\left\{i-1,\ell-i\right\}}\cdot 2^{i-1}}\\
    & \leq && \sum_{i=1}^\ell \left(\frac{1}{2}\right)^{2^{2\cdot (\ell-i)-\left(\ell-i\right)}}\cdot \left(\frac{1}{L^{\frac{k}{2}}}\right)^{2^{2\cdot (\ell-i)-\left(\ell-i\right)}\cdot 2^{i-1}}\\
    & = && \sum_{i=1}^\ell \left(\frac{1}{2}\right)^{2^{\ell-i}}\cdot \left(\frac{1}{L^{\frac{k}{2}}}\right)^{2^{\ell-1}} = \sum_{j=1}^\ell \left(\frac{1}{2}\right)^{2^{j-1}}\cdot \left(\frac{1}{L^{\frac{k}{2}}}\right)^{\sqrt{2^{2\ell+5-7}}}\\
    & \overset{\mathclap{\eqref{eq:ell}}}{\leq} \ && \sum_{j=1}^\ell \left(\frac{1}{2}\right)^{j}\cdot \left(\frac{1}{L^{\frac{k}{2}}}\right)^{\sqrt{\frac{J\cdot r}{2^7\cdot\left(k-1\right)}}} \leq \left(\frac{1}{L}\right)^{\frac{\sqrt{J\cdot r}}{2^4}} \leq \left(\frac{1}{L}\right)^{\frac{\sqrt{J\cdot r}}{2^7}}
\end{alignat*}
\end{proof}
This finishes the proof of Theorem~\ref{TheoremLinearHypergraphsUnweighted}.
\end{proof}

The next theorem will allow us to reduce weighted case to an unweighted version  where Theorem~\ref{TheoremLinearHypergraphsUnweighted} becomes applicable.

\begin{theorem}
\label{TheoremUnWeightedToWeighted}
For $n$, $k\in \NN$ with $n\ge k$, $J\geq 1$, any $n$-element set $X$ and any function $g\colon \binom{X}{k} \rightarrow [0,1]$ with $\sum_{S\in\binom{X}{k}}g(S)>0$ define for every $i\in\NN$
    \begin{enumerate}[(a)]
      \item 
      \[g_i(S) := \frac{1}{4^{i-1}}\cdot \mathds{1}_{\left\{\frac{1}{4^{i-1}}\geq g(S) > \frac{1}{4^{i}}\right\}}\]
      \item 
      \[
      \ell_i := \frac{\sum_{S\in\binom{X}{k}}g_i(S)}{\sum_{S\in\binom{X}{k}}g(S)} \quad\text{and for $\ell_i\neq0 \colon$}\quad
      J_i := \max\left\{1,\frac{\ell_i^{-1}}{2^{i-1}}\right\}.
      \]
    \end{enumerate}
  Suppose there exist $L$, $c\ge 1$ and $p'\in[0,1]$ such that for every $i\in \NN$ with $\ell_i\neq0$ if we have 
  $w\left(10\ell_i^{-1}\cdot g_i ,p'\right)=1$ then  there exists $G_i \subseteq 2^X$ with $\left< 10 \ell_i^{-1}\cdot g_i \right>_{J_i,\frac{L}{10}} \subseteq \left<G_i\right>$  and $w\left(G_i,\frac{p'}{L}\right) \leq \frac{c}{i^2}$. 
    
Then the following holds. If  $w(g,p) = 1$ for some  $p\in[0,1]$ then there exists 
$G\subseteq 2^X$ with $\left<g\right>_{1,L} \subseteq \left<G\right>$ and $w\left(G,\frac{p}{100\cdot c\cdot L}\right) \leq 1$.
\end{theorem}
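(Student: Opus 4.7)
The plan is to decompose $g$ along the dyadic level functions $g_i$, apply the hypothesis at each level with a single choice of $p'$, and glue the resulting pieces together. The key observation is that the scaling $10\ell_i^{-1}$ in front of $g_i$ is chosen precisely so that
\[
w(10\ell_i^{-1} g_i, p') \;=\; 10\ell_i^{-1}\cdot \ell_i W \cdot (p')^k \;=\; 10W\cdot (p')^k,
\]
where $W := \sum_T g(T) = p^{-k}$, is \emph{independent} of $i$. Setting $p' := p\cdot 10^{-1/k}$ thus gives $w(10\ell_i^{-1} g_i, p') = 1$ for every $i$ with $\ell_i > 0$; applying the hypothesis yields $G_i\subseteq 2^X\sm\{\emptyset\}$ (without loss of generality) with $\left<10\ell_i^{-1} g_i\right>_{J_i, L/10} \sub \left<G_i\right>$ and $w(G_i, p'/L) \le c/i^2$. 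For $i$ with $\ell_i = 0$ take $G_i := \emptyset$, and set $G := \bigcup_i G_i$.

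The weight bound is then routine. From $p = 10^{1/k}\cdot p'$ and $c\ge 1$ one has $p/(100cL) \le (p'/L)/(10c)$, and since every $T\in G_i$ satisfies $|T|\ge 1$,
\[
w(G_i, p/(100cL)) \;\le\; \tfrac{1}{10c}\cdot w(G_i, p'/L) \;\le\; \tfrac{1}{10 i^2}.
\]
Summing yields $w(G, p/(100cL)) \le \pi^2/60 < 1$.

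The main work is the covering $\left<g\right>_{1,L}\sub \left<G\right>$. For $S\in \left<g\right>_{1,L}$, introduce the dyadic pieces $g^{(i)}(T) := g(T)\cdot \one_{g(T)\in (4^{-i}, 4^{-(i-1)}]}$, so that $g = \sum_i g^{(i)}$ and $g^{(i)}\le g_i\le 4g^{(i)}$ pointwise (in particular $\sum_i \ell_i \le 4$). Set $s_i := \sum_{T\in\binom{S}{k}} g_i(T)$, $a_i := \sum_{T\in\binom{S}{k}} g^{(i)}(T)\le s_i$, and $b_i := \sum_{x\in S}\sum_{T\ni x} g^{(i)}(T)$, noting $\deg_{g_i}(S)\le 4b_i$ and $\sum_i b_i = \deg_g(S)$. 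Unpacking Definition~\ref{def:lgrJL} applied to $10\ell_i^{-1} g_i$ (total weight $10W$) shows that $S\in \left<10\ell_i^{-1} g_i\right>_{J_i, L/10}$ is implied by the pair
\[
\text{(I)}\quad s_i \ge L_i := \tfrac{1}{10}\max\{\ell_i, 2^{1-i}\}, \qquad \text{(II)}\quad s_i \ge \tfrac{L}{10ek}\,p^{k-1} b_i,
\]
where (I) is the $J_i$ threshold and (II) combines the $L/10$ slack with $\deg_{g_i}(S)\le 4b_i$ and $10^{-1+1/k}\le 1$. Suppose for contradiction that no single $i$ satisfies both: with $A := \{i: s_i<L_i\}$ and $B := \{i: s_i<\tfrac{L}{10ek}p^{k-1}b_i\}$ we then have $A\cup B = \NN$, and
\[
\sum_i a_i \;<\; \sum_{i\in A} L_i + \tfrac{L}{10ek}\,p^{k-1}\sum_{i\in B} b_i \;\le\; \tfrac{3}{5} + \tfrac{L}{10ek}\,p^{k-1}\deg_g(S),
\]
using $\sum_i L_i\le \tfrac{1}{10}\bigl(\sum_i \ell_i + \sum_i 2^{1-i}\bigr) \le \tfrac{3}{5}$. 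The second defining inequality of $\left<g\right>_{1,L}$ gives $\deg_g(S)\le \tfrac{4ek}{L}\,p^{1-k}\sum_i a_i$, so the right-hand side is at most $\tfrac{3}{5}+\tfrac{2}{5}\sum_i a_i$, forcing $\sum_i a_i < 1$. This contradicts $\sum_i a_i = \sum_{T\sub S} g(T) \ge 1$ from the first defining inequality, so some $i^*$ meets both (I) and (II) and $S\in \left<G_{i^*}\right>\sub \left<G\right>$.

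The main obstacle is precisely this simultaneous two-inequality pigeonhole: the constants in the hypothesis are finely tuned to make it work, with the scaling $10$ making weights uniform across levels, $J_i$ making $\sum_i L_i$ geometrically summable, the $c/i^2$ budget making the total weight series convergent, and the $L/10$ slack absorbing the factor-$4$ losses from replacing $g^{(i)}$ by $g_i$.
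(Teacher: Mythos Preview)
Your proof is correct and follows essentially the same strategy as the paper: both choose $p'=p\cdot 10^{-1/k}$ so that $w(10\ell_i^{-1}g_i,p')=1$ uniformly in $i$, take $G=\bigcup_i G_i$, and verify the weight and covering conditions separately. The only cosmetic difference is in the covering step: the paper packages the two thresholds into a single inequality $10\sum_{T\subseteq S}\sum_i g_i(T)\ge \sum_i\bigl(\ell_i+2^{1-i}+\tfrac{L}{4ek}p^{k-1}\deg_{g_i}(S)\bigr)$ and pigeonholes directly on the summand, whereas you argue by contradiction via the failure sets $A$ and $B$; the underlying arithmetic (using $\sum_i\ell_i\le 4$, $\sum_i 2^{1-i}=2$, $g_i\le 4g^{(i)}$, and the factor $10^{-1+1/k}\le 1$) is identical.
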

\begin{proof} 

 Let $p\in[0,1]$ and let $g\colon \binom{X}{k}\to[0,1]$ be as in the assumption of the theorem, i.e.\ $w(g,p) = 1$.

From the definition of $g_i$ and $\ell_i$, we have for every $i\in \NN$ with $\ell_i\neq0$ that  $\sum_{S\in\binom{X}{k}}\ell_i^{-1}\cdot g_i(S)=\sum_{S\in\binom{X}{k}}g(S)$. Since $w(g,p)=1$ we can choose $p':=\frac{p}{10^{1/k}}$ and obtain $w\left(10\ell_i^{-1}\cdot g_i,p'\right)=1$. Therefore, by the assumption of the theorem, for every $i$ with $\ell_i\neq0$, there exists $G_i \subseteq 2^X$ with $\left< 10 \ell_i^{-1}\cdot g_i \right>_{J_i,\frac{L}{10}} \subseteq \left<G_i\right>$  and $w\left(G_i,\frac{p'}{L}\right) \leq \frac{c}{i^2}$.  We set 
 \[
 G=\bigcup_{i=1\colon \ell_i\neq 0}^\infty G_i
 \]
 and we will verify the assertion of the theorem for $G$.

First we estimate (using monotonicity of the weight function):
    \begin{alignat*}{100}
       w\left(G,\frac{p}{100\cdot c\cdot L}\right) \leq \frac{1}{10 c}\cdot \sum_{i=1 \colon \ell_i\neq 0}^{\infty} w\left(G_i,\frac{10\cdot p'}{10\cdot L}\right) \leq \frac{1}{10 c}\cdot \sum_{i=1}^{\infty} \frac{c}{i^2} \leq 1
    \end{alignat*}

Then we turn to show $\left<g\right>_{1,L} \subseteq \left<G\right>$. Take any $S\in \left<g\right>_{1,L}$. By the definition of $\lgr_{1,L}$ (cf.\ Definition~\ref{def:lgrJL}),
we have 
\begin{equation}\label{eq:g1L}
\sum_{T\in \binom{S}{k}}g(T)\geq \max
\left\{1, 
\frac{L}{4ek}\cdot \left(\sum_{T\in \binom{X}{k}} g(T)\right)^{-1+\frac{1}{k}}\cdot \sum_{x\in S}\sum_{T: x\in T} g(T)
\right\}.
\end{equation}
Moreover, from the definition of the functions $g_i$ and parameters $\ell_i$ we have $g(T)\le \sum_{i=1}^\infty g_i(T)\leq 4\cdot g(T)$ for all $T$ and $\sum_{i=1}^\infty \ell_i \leq 4$. 
We thus obtain from~\eqref{eq:g1L} the following inequality 
\begin{alignat*}{100}
10\cdot \sum_{T\in \binom{S}{k}} \sum_{i=1}^\infty g_i(T) \geq& \sum_{i=1}^\infty \ell_i + \sum_{i=1}^\infty \frac{1}{2^{i-1}} + \frac{L}{4ek}\cdot \left(\sum_{T\in \binom{X}{k}} g(T)\right)^{-1+\frac{1}{k}}\cdot \sum_{x\in S}\sum_{T: x\in T} \sum_{i=1}^\infty g_i(T)\\
=& \sum_{i=1}^\infty\left(\ell_i+\frac{1}{2^{i-1}}+\frac{L}{4ek}\cdot \left(\sum_{T\in \binom{X}{k}} g(T)\right)^{-1+\frac{1}{k}}\cdot \sum_{x\in S}\sum_{T: x\in T} g_i(T)\right).
\end{alignat*}
It follows that there exists an index $i\in \NN$ with $\ell_i\neq 0$ such that 
\begin{alignat*}{100}
10\cdot \sum_{T\in \binom{S}{k}}  g_i(T) \geq\ell_i+\frac{1}{2^{i-1}}+\frac{L}{4ek}\cdot \left(\sum_{T\in \binom{X}{k}} g(T)\right)^{-1+\frac{1}{k}}\cdot \sum_{x\in S}\sum_{T: x\in T} g_i(T)
\end{alignat*}
and therefore (since $\sum_{T\in\binom{X}{k}}\ell_i^{-1}\cdot g_i(T)=\sum_{T\in\binom{X}{k}}g(T)=p^{-k}\ge 1$) we get
\begin{alignat*}{100}
 \sum_{T\in \binom{S}{k}} 10\cdot \ell_i^{-1}\cdot g_i(T)  &\geq  1+\frac{\ell_i^{-1}}{2^{i-1}}+ \frac{L}{4ek}\cdot \left(\sum_{T\in \binom{X}{k}} \ell_i^{-1}\cdot g_i(T)\right)^{-1+\frac{1}{k}}\cdot \sum_{x\in S}\sum_{T: x\in T} \ell_i^{-1}\cdot g_i(T)\\
& \ge 1+\frac{\ell_i^{-1}}{2^{i-1}}+\frac{\frac{L}{10}}{4ek}\cdot \left(\sum_{T\in \binom{X}{k}} 10\cdot \ell_i^{-1}\cdot g_i(T)\right)^{-1+\frac{1}{k}}\cdot \sum_{x\in S}\sum_{T: x\in T} 10\cdot \ell_i^{-1}\cdot g_i(T).
\end{alignat*}
Recalling the definitions of $J_i$  we thus obtain:
\[
\sum_{T\in \binom{S}{k}} 10\cdot \ell_i^{-1}\cdot g_i(T) \geq \max\left\{J_i, \frac{\frac{L}{10}}{4ek}\cdot \left(\sum_{T\in \binom{X}{k}} 10\cdot \ell_i^{-1}\cdot g_i(T)\right)^{-1+\frac{1}{k}}\cdot \sum_{x\in S}\sum_{T: x\in T} 10\cdot \ell_i^{-1}\cdot g_i(T)\right\},
\]
which implies that (recalling Definition~\ref{def:lgrJL})
\[
S\in \left<10 \ell_i^{-1}\cdot g_i\right>_{J_i,\frac{L}{10}}.
\]
Therefore this yields  $S\in \bigcup_{i=1\colon \ell_i\neq0}^\infty \left<10\ell_i^{-1}\cdot g_i\right>_{J_i,\frac{L}{10}}$ which finishes the proof due to our choice of $G=\bigcup_{i=1\colon \ell_i\neq 0}^\infty G_i$ such that $\left< 10 \ell_i^{-1}\cdot g_i \right>_{J_i,\frac{L}{10}} \subseteq \left<G_i\right>$.
\end{proof}
 
 Now we are in position to provide the proof of Theorem~\ref{TheoremSimpleHypergraphs}. 
\begin{proof}[Proof of Theorem~\ref{TheoremSimpleHypergraphs}]
    First we consider the special case when $c=1$, hence $\supp(g)$ is a linear $k$-uniform hypergraph, i.e.\
    \begin{alignat*}{100}
        \left|\left\{T\in \supp(g)\left|x,y\in T\right.\right\}\right|&\leq 1
    \end{alignat*}
 We set  $L=10\cdot 2^{10}\cdot e^2$  with foresight and consider the set
    \begin{alignat*}{100}
        V_L := \left\{S\in 2^X \,\left|\, \frac{L}{4ek}\cdot \left(\sum_{T\in \binom{X}{k}} g(T)\right)^{-1+\frac{1}{k}}\cdot \sum_{x\in S}\sum_{T: x\in T} g(T)\geq 1\right.\right\}.
    \end{alignat*}
Observe that $\left<g\right>\subseteq \left<g\right>_{1,L}\cup V_L$ since for every $S\in \left<g\right>$ we have $\sum_{T\in \binom{S}{k}} g(T) \geq 1$ and $S\not\in\lgr_{1,L}$ implies that the maximum in the definition of $\lgr_{1,L}$ is attained through the second term and hence $S\in V_L$. 

Next we consider the function $f\colon\binom{X}{1}\rightarrow [0,1], f\left(\left\{x\right\}\right):=\frac{L}{4ek}\cdot \left(\sum_{T\in \binom{X}{k}} g(T)\right)^{-1+\frac{1}{k}} \sum_{T: x\in T}g(T)$ defined for every $x\in X$. We set $\tf:=\min\{f,1\}$ and observe that $\left<\tf\right>=V_L$. We estimate 
    \begin{alignat*}{100}
         \sum_{x\in X} \tf\left(\left\{x\right\}\right) & = \sum_{x\in X}\min\left\{1, \frac{L}{4ek}\cdot \left(\sum_{T\in \binom{X}{k}} g(T)\right)^{-1+\frac{1}{k}}\cdot \sum_{T: x\in T} g(T)\right\}\\
          & \le \frac{L}{4e}\cdot \left(\sum_{T\in \binom{X}{k}} g(T)\right)^{-1+\frac{1}{k}}\cdot \sum_{T\in \binom{X}{k}} g(T)\\
          & = \frac{L}{4e}\cdot \left(\sum_{T\in \binom{X}{k}} g(T)\right)^{\frac{1}{k}}.
    \end{alignat*}
    
From the assumption $w(g,p)=1$ we have that $p=\left(\sum_{T\in \binom{X}{k}} g(T)\right)^{-\frac{1}{k}}$  and therefore 
$w\left(f,\frac{4e}{L}\cdot p\right)=1$. 
Hence, Proposition~\ref{TheoremSingletons}  asserts the existence of $G'\subseteq 2^X$ such that $\left<\tf\right>=V_L\subseteq\left< G'\right>$ and  $w\left(G',\frac{p}{L}\right)\leq 1$. 

Thus, it remains to cover $\left<g\right>_{1,L}$. For every $i\in\NN$, we define $g_i$ as follows for all $S\in\binom{X}{k}$:
\[
g_i(S) := \frac{1}{4^{i-1}}\cdot \mathds{1}_{\left\{\frac{1}{4^{i-1}}\geq g(S) > \frac{1}{4^{i}}\right\}}.
\]
Consequently, for every $i\in\NN$,  define $\ell_i$ and $J_i$ through
\[
\ell_i := \frac{\sum_{S\in\binom{X}{k}}g_i(S)}{\sum_{S\in\binom{X}{k}}g(S)} \quad\text{and for $\ell_i\neq0\colon$}\quad
      J_i := \max\left\{1,\frac{\ell_i^{-1}}{2^{i-1}}\right\}.
\]
For every $i$ with $\ell_i\neq0$ and for every $S\in\binom{X}{k}$ we set $\tg_i(S):=10\cdot \ell_i^{-1}\cdot g_i(S)$ and, by the choice of $\ell_i$, we get 
$\sum_{S\in\binom{X}{k}} \tg_i(S)=\sum_{S\in\binom{X}{k}}10\cdot g(S)$. Since $w(g,p)=1$ we have with $p':=\frac{p}{10^{1/k}}$ that $w\left(\tg_i,p'\right)=1$ for all $i$ (where $\ell_i\neq 0$).  Recall that  $\tg_i$ is constant on its support $\left\{\frac{1}{4^{i-1}}\geq g(S) > \frac{1}{4^{i}}\right\}$ and equals to $\frac{1}{r_i}$ with
\begin{equation}\label{eq:r_i}
r_i:=\frac{\ell_i\cdot 4^{i-1}}{10}.
\end{equation}

For any $i\in\NN$ with $\ell_i\neq 0$, the assumptions of Theorem~\ref{TheoremLinearHypergraphsUnweighted} are fulfilled 
(with $\tg_i$ as $g$, $p'$ as $p$,  $L/10$ as $L$, $J_i$ as $J$ and $r_i$ as $r$).  Therefore, we find a set $G_i\subseteq 2^X$ with  
$\left<\tg_i\right>_{J_i,\frac{L}{10}}\subseteq \left<G_i\right>$ and weight
 \begin{equation}\label{eq:weight-G_i}
 w\left(G_i,\frac{10 p'}{L}\right) \leq  \left(\frac{10}{L}\right)^{\frac{\sqrt{J_i\cdot r_i}}{2^7}}.
 \end{equation}
Next we claim that we always have $J_i\cdot r_i\ge \max\{1,\frac{2^{i-1}}{10}\}$. Indeed,   
 if  $J_i=1$ then $1\ge \frac{\ell_i^{-1}}{2^{i-1}}$ and $\ell_i\cdot 2^{i-1}\ge 1$, which implies $J_i\cdot r_i\ge \frac{2^{i-1}}{10}$ (due to~\eqref{eq:r_i}). 
If $J_i=\frac{\ell_i^{-1}}{2^{i-1}}$ then $J_i\cdot r_i=\frac{2^{i-1}}{10}$. We thus futher simplify~\eqref{eq:weight-G_i} to
\[
w\left(G_i,\frac{p'}{L}\right) \leq w\left(G_i,\frac{10 p'}{L}\right) \leq \left(\frac{10}{L}\right)^{ \sqrt{2^{i-19}}}\le \frac{200}{i^2}.
\]

Thus we have shown 
that, for every $i\in\NN$ with $\ell_i\neq 0$, we have
\[
w\left(10 \ell_i^{-1}\cdot g_i,p'\right)=1
\]
 and there exists $G_i \subseteq 2^X$ with $\left<10 \ell_i^{-1}\cdot g_i\right>_{J_i,\frac{L}{10}} \subseteq \left<G_i\right>$  and $w\left(G_i,\frac{p'}{L}\right) \leq \frac{200}{i^2}$.
  
  We thus conclude, by Theorem~\ref{TheoremUnWeightedToWeighted} (with $10\cdot 2^{10}e^2$ as $L$, $200$ as $c$), that there exists a set $G\subseteq 2^X\setminus\{\emptyset\}$ with $\left<g\right>_{1,L} \subseteq \left<G\right>$ and $w\left(G,\frac{p}{2\cdot 10^4\cdot L}\right) \leq 1$. Altogether we have
 \[
 \lgr\subseteq \left<G'\cup G\right> \quad\text{and}\quad w\left(G'\cup G,\frac{p}{4\cdot 10^4\cdot L}\right)\le 1.
 \]
  This finishes the special case when the $k$-uniform hypergraph $\supp(g)$ is linear with the constant 
\begin{equation}\label{eq:Ctilde}  
  \tC=10^5\cdot 2^{12}\cdot e^2.
\end{equation}

Next we turn to the case
    \begin{alignat*}{100}
        \Delta_2\left(\supp(g)\right)&\leq c^k.
    \end{alignat*}
Observe that we can write $g=\sum_{j=1}^{(2c)^k}g_j$ with $g_j\colon \binom{X}{k}\to[0,1]$ (so that $\sum_{S\in\binom{X}{k}} g_j(S)>0$), where $\supp(g_j)\cap\supp(g_i)=\emptyset$ for all $i\neq j\in[c^k]$ and $\supp(g_j)$ is a linear $k$-uniform hypergraph for each $j\in[c^k]$. This can be seen by considering an auxiliary graph $F$ on the vertex set $\supp(g)$, where $\{e,f\}\in \binom{\supp(g)}{2}$ is an edge whenever $|e\cap f|\ge 2$. It is clear that the maximum degree $\Delta(F)$ of $F$ is at most $\binom{k}{2}c^k<(2c)^k=:m$ and therefore the chromatic number $\chi(F)$ of $F$ is at most $m$. Each color  class $F_j\subseteq \supp(g)$ is a linear $k$-uniform hypergraph. Setting $g_j=g\cdot \one_{F_j}$ we obtain the desired decomposition. 

Next we observe that $\lgr\subseteq \cup_{j=1}^{m} \left<m\cdot g_j\right>$. Indeed, let $S\in\lgr$, then we have $\sum_{T\subseteq S} \sum_{j=1}^m g_j(T)= \sum_{T\subseteq S}g(T)\ge 1$, from which we find an index $j\in[m]$ such that $\sum_{T\subseteq S} g_j(T)\ge \frac{1}{m}$, hence $\sum_{T\subseteq S} m\cdot g_j(T)\ge 1$ and $S\in\left<m\cdot g_j\right>$. 

Next, for every $j\in[m]$, we consider the function $\left(m\cdot g_j\right)^{(m)}\colon \binom{X^{(m)}}{k}\to[0,\infty)$, which is obtained by creating $m$ independent copies of the function $m\cdot g_j$ (cf.\ Definition~\ref{def:Xm} where here we use $m\cdot g_j$ instead of $g$). 
We observe that 
\begin{alignat*}{100}
w\left(\left(m\cdot g_j\right)^{(m)},\frac{p}{(2c)^2}\right)=\frac{m}{(2c)^k}\cdot w\left(m\cdot g_j,\frac{p}{2c}\right)=\frac{m^2}{(2c)^{2k}} w\left(g_j,p\right)=w\left(g_j,p\right)\le w\left(g,p\right)=1.
\end{alignat*}
Set $h(S):=\min\{1,\left(m\cdot g_j\right)^{(m)}(S)\}$ for all $S\in\binom{X^{(m)}}{k}$ and observe that $\left<h\right>=\left<\left(m\cdot g_j\right)^{(m)}\right>$ holds. 
Moreover, we have $w\left(h,\hat{p}\right)=1$ for some $\hat{p}\ge \frac{p}{(2c)^2}$.  
Since $\supp\left(h\right)$ is a linear $k$-uniform hypergraph, we infer by the first part of the theorem that there exists a constant $\tC$ (cf.~\eqref{eq:Ctilde}) so that there exists a set $G^{(m)}_j\subseteq 2^{X^{(m)}}$ with 
$\left<h\right>\subseteq \left<G^{(m)}_j\right>$ and 
\[
w\left(G^{(m)}_j,\frac{\hat{p}}{\tC}\right)\le 1.
\]
We thus have $\left<\left(m\cdot g_j\right)^{(m)}\right>\subseteq \left<G^{(m)}_j\right>$, $w\left(G^{(m)}_j,\frac{p}{(2c)^2\cdot \tC}\right)\le 1$ and, additionally, $w\left(\left(m\cdot g_j\right)^{(m)},\frac{p}{(2c)^2}\right)\le 1$.
Lemma~\ref{lem:PowerTrick} (we apply it to  $m\cdot g_j$ for $g$,  $p/(2c)$ instead of $p$, $\tC$ instead of $L$ and $2c$ instead of $c$) asserts the existence of a set $G_j\subseteq 2^X$ with $\left<m\cdot g_j\right>\subseteq \left< G_j\right>$ and $w\left(G_j,\frac{p}{(2c)^2\cdot \tC}\right)\le \frac{1}{(2c)^k}$. 

Finally,  we set $G:=\cup_{j=1}^{m} G_j$ and, hence, $\lgr\subseteq \bigcup_{j=1}^{m} \left<m\cdot g_j\right>\subseteq \lGr$ and 
\[
w\left(G,\frac{p}{(2c)^2\cdot \tC}\right)\le 1.
\]
We thus can choose $C:=4\tC$. This finishes the proof of Theorem~\ref{TheoremSimpleHypergraphs}.
\end{proof}

\end{document}